\documentclass[reqno,final]{siamart}
\usepackage{color} 
\usepackage{amssymb} 
\usepackage{amsmath} 
\usepackage{epsfig}
\usepackage{listings}
\usepackage{amsmath,amsfonts} 
\usepackage{matlab-prettifier}
\usepackage{algorithm}
\usepackage{algorithmic}
\usepackage[T1]{fontenc} \usepackage{ae} \usepackage{aecompl}

\lstMakeShortInline[style=Matlab-editor]@
\usepackage{caption, subcaption}
\usepackage{graphicx}
\usepackage{cite}
\usepackage{tkz-graph}
\newtheorem{thm}{Theorem}[section]
\newtheorem{lem}{Lemma}[section]

\DeclareMathAlphabet\mathbfcal{OMS}{cmsy}{b}{n}

\newcommand{\Ah}{\widehat{\mathbf{A}}}

\def\A{\mathbf{A}}
\def\B{\mathbf{B}}

\def\D{\mathbf{D}}
\def\E{\mathbf{E}}
\def\F{\mathbf{F}}

\def\FF{{\mathbfcal{F}}}

\def\HH{\mathbfcal{H}_\text{m}}
\def\hatHH{\widehat{\mathbfcal{H}}_\text{m}}
\def\eye{\mathbf{I}}

\def\M{\mathbf{M}}

\def\P{\mathbf{P}}
\def\Q{\mathbf{Q}}
\def\R{\mathbf{R}}
\def\S{\mathbf{S}}
\def\T{\mathbf{T}}
\def\U{\mathbf{U}}
\def\V{\mathbf{V}}
\def\W{\mathbf{W}}
\def\Wn{{\mathbf{W}^\text{noise}}}
\def\WW{{\mathcal{W}}}

\def\Y{\mathbf{Y}}
\def\Z{\mathbf{Z}}
\def\randmatrix{\boldsymbol{\Omega}}
\def\noisevar{{\vec{\Gamma}}_\text{noise}^{-1}}
\def\eig{\mathbf{\Lambda}}
\def\post{\mathbf{\Gamma}_\text{post}}
\def\MMprior{\mathbf{Z}}
\def\reals{\mathbb{R}}

\def\domain{\mathcal{D}}

\newcommand{\ip}[2]{\langle {#1}, {#2}\rangle}
\newcommand{\mip}[2]{\langle {#1}, {#2}\rangle_{\scriptscriptstyle{\M}}}
\newcommand{\mnorm}[1]{ \| {#1} \|_{\scriptscriptstyle{\M}}}

\renewcommand{\vec}[1]{{\mathchoice
                     {\mbox{\boldmath$\displaystyle{#1}$}}
                     {\mbox{\boldmath$\textstyle{#1}$}}
                     {\mbox{\boldmath$\scriptstyle{#1}$}}
                     {\mbox{\boldmath$\scriptscriptstyle{#1}$}}}}

\DeclareMathAlphabet      {\mathup}{OT1}{\familydefault}{m}{n}

\newcommand{\Ns}{{n_s}}
\newcommand{\Nt}{{n_t}}
\newcommand{\ipar}{m}
\newcommand{\dpar}{\vec{m}}
\newcommand{\Cprior}{\Gamma_{\mathup{pr}}}
\newcommand{\Cpost}{\Gamma_{\mathup{post}}}
\newcommand{\ncov}{\vec{\Gamma}_{\!\mathup{noise}}}
\newcommand{\like}{\pi_{\mathup{like}}}
\newcommand{\obs}{\vec{y}}
\newcommand{\GM}[2]{\mathcal{N}({#1}, {#2})}
\newcommand{\priorm}{\mu_{\mathup{pr}}}
\newcommand{\postm}{\mu_{\mathup{post}}^{{\obs}}}
\newcommand{\iparpr}{\ipar_{\mathup{pr}}}
\newcommand{\iparmap}{\ipar_{\mathup{post}}^\obs}
\newcommand{\dparpr}{\dpar_{\mathup{pr}}}
\newcommand{\dparmap}{\dpar_{\mathup{post}}^\obs}

\newcommand{\mb}[1]{\mathbb{#1}}
\newcommand{\bmat}[1]{\begin{bmatrix} #1 \end{bmatrix}}
\def\scriptO{{{\it O}\kern -.42em {\it `}\kern + .20em}}
\newcommand{\prior}[1]{\mathbf{\Gamma}^{#1}_{\text{pr}}}
\newcommand{\expectation}[1]{\mathbb{E}\left[#1\right]}

\newcommand{\aopt}[1]{\Phi_\text{aopt}{#1}}
\newcommand{\aoptrand}[1]{\widehat{\Phi}_\text{aopt}{#1}}
\newcommand{\gradaoptrand}[1]{\widehat{\partial_j\Phi}_\text{aopt}{#1}}
\newcommand{\moda}[1]{\Phi_\text{mod}{#1}}
\newcommand{\modarand}[1]{\widehat{\Phi}_\text{mod}{#1}}

\newcommand{\trace}[1]{\mathsf{tr}\,\left(#1\right)}
\newcommand{\diag}{\mathsf{diag}\,}

\DeclareMathOperator*{\argmin}{arg\,min}

\title{Randomization and reweighted $\ell_1$-minimization for A-optimal
design of linear inverse problems\thanks{This work was funded, in part, by the National Science Foundation through the grant DMS-1745654.}}

\author{
Elizabeth Herman\thanks{Department of Mathematics, North Carolina State University,
Raleigh, NC 27695-8205, USA }
\and
Alen Alexanderian\footnotemark[2] 
\and	
Arvind K. Saibaba\footnotemark[2] 
}

\begin{document}

\maketitle

\begin{abstract} 
We consider optimal design of PDE-based Bayesian linear inverse problems with
infinite-dimensional parameters. We focus on the A-optimal design criterion,
defined as the average posterior variance and quantified by the trace of the
posterior covariance operator.   We propose using structure exploiting
randomized methods to compute the A-optimal objective function and its
gradient, and provide a detailed analysis of the error for the proposed
estimators.    To ensure sparse and binary design vectors, we develop a novel
reweighted $\ell_1$-minimization algorithm. We also introduce a modified
A-optimal criterion and present randomized estimators for its efficient
computation.  We present numerical results illustrating the proposed methods 
on a model contaminant source identification problem, where the
inverse problem seeks to recover the initial state of a contaminant plume
using discrete measurements of the contaminant in space and time.  
\end{abstract}

\begin{keywords}
Bayesian Inversion; A-Optimal experimental design; Large-scale ill-posed inverse problems;
Randomized matrix methods; Reweighted $\ell_1$ minimization; Uncertainty quantification
\end{keywords}

\begin{AMS}
35R30;  
62K05;  
68W20;  
35Q62;  
65C60;  
62F15.  
\end{AMS}

%
%
\section{Introduction}
A central problem in scientific computing involves estimating parameters that
describe mathematical models, such as initial conditions, boundary conditions,
or material parameters.  This is often addressed by using experimental
measurements and a mathematical model to compute estimates of unknown model
parameters.  In other words, one can estimate the parameters by solving an
inverse problem.  Experimental design involves specifying the experimental
setup for collecting measurement data, with the goal of accurately recovering
the parameters of interest.  As such, optimal experimental design (OED) is an
important aspect of effective and efficient parameter estimation.  Namely, in
applications where collecting experimental measurements is expensive (e.g.,
because of budget, labor, or physical constraints), deploying experimental
resources has to be done efficiently and in a parsimonious manner.  Even when
collecting large amounts of data is feasible, OED is still important; the
computational cost of processing all the data may be prohibitive or a poorly
designed experiment with many measurements may miss important information about
the parameters of interest.

To make matters concrete, we explain the inverse problem and the experimental
design in the context of an application. Consider the transport of a
contaminant in an urban environment or the subsurface. The forward problem
involves forecasting the spread of the contaminant, whereas the inverse problem
involves using the measurements of the contaminant concentration at discrete
points in space and time to recover the source of the contaminant (i.e., the
initial state). In this application, OED involves optimal placement of sensors,
at which measurement data is collected, to reconstruct the initial state. 

We focus on OED for Bayesian linear inverse problems governed by PDEs.  In our
formulation of the OED problem, the goal is to find an optimal subset of
sensors from a fixed array of $\Ns$ candidate sensor sites. The experimental
design is parameterized by assigning non-negative weights to each candidate
sensor location.  Ideally, we seek a binary weight vector $\vec{w}$; if $w_i =
1$, a sensor will be placed at the $i$th candidate location, and if $w_i = 0$,
no sensor will be placed at the location.  However, formulating an optimization
problem over binary weight vectors leads to a problem with combinatorial
complexity that is computationally prohibitive.  A common approach to address
this issue is to relax the binary requirement on design weights by letting the
weights take values in the interval $[0, 1]$. The sparsity of the design will
then be controlled using a penalty method; see
e.g.,~\cite{HaberMagnantLuceroEtAl12, a14infinite,yu2018scalable}. This results
in an optimization problem of the following form:
\begin{equation}\label{equ:basic_prob}
   \min_{\vec{w} \in [0, 1]^\Ns} \Phi(\vec{w}) + \gamma P(\vec{w}),
\end{equation}
where $\Phi$ denotes the design criterion, $\gamma > 0$
is a penalty parameter, and $P$ is a penalty function.

Adopting a Bayesian approach to the inverse problem, the design criterion will be a
measure of the uncertainty in the estimated parameters.  In this article, we
focus on a popular choice known as the A-optimal
criterion~\cite{Ucinski05,ChalonerVerdinelli95}. That is, we seek a sensor
configuration that results in a minimized average posterior variance.  The design
criterion, in this case, is given by the trace of the posterior covariance
operator. 

One major challenge in solving~\cref{equ:basic_prob}, specifically for PDE-based inverse
problems, is the computational cost of objective function and gradient
evaluations.  Namely, the posterior covariance operator $\post$ is dense,
high-dimensional, and computationally challenging to explicitly
form---computing applications of $\post$ to vectors requires solving multiple
PDEs.  Furthermore, these computations must be
performed at each iteration of an optimization algorithm used to
solve~\cref{equ:basic_prob}.  To address this computational challenge,
efficient and accurate matrix-free approaches for computing the OED objective
and its gradient are needed.  Another challenge in solving the OED
problem~\cref{equ:basic_prob} is the need for a suitable penalty method that is
computationally tractable and results in sparse and binary optimal weight
vectors.  This article is about methods for overcoming these computational
challenges.

\paragraph{Related work} For an extensive review of the OED literature, we
refer the reader to~\cite{alexanderian2018dopt}. We focus here on works that
are closely related to the present article.  Algorithms for A-optimal designs for
ill-posed linear inverse problems were proposed
in~\cite{HaberHoreshTenorio08,HaberMagnantLuceroEtAl12,FJ_16} and more specifically
for infinite-dimensional Bayesian linear inverse problems
in~\cite{a14infinite}. In majority of these articles, Monte-Carlo trace estimators are used
to approximate the A-optimal design criterion and its gradient.
Also,~\cite{HaberMagnantLuceroEtAl12,a14infinite} advocate use of low-rank
approximations using the Lanczos algorithm or the randomized
SVD~\cite{Halko2011structure}. We refer to our previous
work~\cite{saibaba2016randomized} for comparison of Monte Carlo trace
estimators and those based on randomized subspace iteration; it was shown that
the latter are significantly more accurate than Monte Carlo trace estimators.  
Regarding sparsity control, various techniques have been used to approximate
the $\ell_0$-``norm'' to enforce sparse and binary designs.  For example,
\cite{HaberHoreshTenorio08,HaberHoreshTenorio10,HaberMagnantLuceroEtAl12}  use
the $\ell_1$-penalty function with an appropriate threshold to post-process the solution. In \cite{a14infinite}, a continuation approach is proposed that
involves solving a sequence of optimization problems with non-convex penalty
functions that approximate the $\ell_0$-``norm''. More recently,
in~\cite{yu2018scalable}, a  sum-up rounding approach is proposed to obtain
binary optimal designs.

\paragraph{Our approach and contributions} 
In this article, we make the following advances in methods for A-optimal
sensor placements in infinite-dimensional Bayesian linear inverse problems: 

\begin{enumerate} 
\item We present efficient and accurate randomized
estimators of the A-optimal criterion and its gradient, based on randomized subspace iteration.  This is accompanied by a detailed algorithm that guides
efficient implementations, discussion of computational cost, as well as
theoretical error analysis; see~\cref{sec:criterion}.  Our estimators are
structure exploiting, in that they use the low-rank structure embedded in the
posterior covariance operator.   To quantify the accuracy of the estimators we present
rigorous error analysis, significantly advancing the methods
in~\cite{saibaba2016randomized}. A desirable feature of our analysis is that
the bounds are independent of the dimension of the discretized inversion
parameter.  Furthermore, the computational cost {(measured in the number
of PDE solves)} of the A-optimal objective and gradient  using our
proposed estimators is independent of the discretized parameter dimension.

\item We propose a new algorithm for optimal sensor placement that is based on
solving a sequence of reweighted $\ell_1$-optimization problems;
see~\cref{sec:reweightl1}.  An important benefit of this approach is that one
works with convex penalty functions, and since the A-optimal criterion itself
is a convex function of $\vec{w}$, in each step of the
reweighted $\ell_1$ algorithm a convex optimization problem is solved.  We
derive this algorithm by using the Majorization-Minimization principle applied
to a novel penalty function that promotes binary designs. The
solution of the
reweighted $\ell_1$-optimization problems is accelerated by the efficient
randomized estimators for the optimality criterion and its gradient.  To our
knowledge, the presented framework, based on reweighted $\ell_1$-minimization,
is the first of its kind in the context of OED.

\item Motivated by reducing computational cost, we propose a new criterion
known as modified A-optimal criterion; see~\cref{sec:mod_a}. This criterion is
derived by considering a suitably weighted A-optimal criterion.  We
present randomized estimators with complete error analysis for computing the
modified A-optimal criterion and its gradient.
\end{enumerate}
We illustrate the benefits of the proposed algorithms on a model problem from
contaminant source identification. A comprehensive set of numerical
experiments is provided so to
test various aspect of the presented approach; see~\cref{sec:numerics}.

Finally, we remark that the randomized estimators and the
reweighted $\ell_1$ approach for promoting sparse and binary weights are of
independent interest beyond the application to OED.

%
%
\section{Preliminaries}\label{sec:background}

In this section, we recall the background material needed 
in the remainder of the article.

\subsection{Bayesian linear inverse problems} 

We consider a linear inverse problem of estimating
$\ipar$, using the model 
\[
   F \ipar + \vec{\eta} = \obs.
\]
Here $F$ is a linear parameter-to-observable map (also called 
the forward operator), 
$\vec{\eta}$ represents the measurement noise, and $\obs$ is
a vector of measurement data.  
The inversion parameter $\ipar$ is an element of 
$\mathcal{V} = L^2(\mathcal{D})$, where $\mathcal{D}$
is a bounded domain. 

\subsubsection*{The setup of the inverse problem}
To fully specify the inverse problem, we need to describe the
prior law of $\ipar$ and our
choice of data likelihood. For the prior, we choose a Gaussian measure
$\priorm=\GM{\iparpr}{\Cprior}$. We assume 
the prior mean $\iparpr$ is a  sufficiently
regular element of $\mathcal{V}$ and that the covariance operator 
$\Cprior:\mathcal{V} \to \mathcal{V}$ is a
strictly positive self-adjoint trace-class operator.  Following 
the developments in~\cite{Stuart10,Bui-ThanhGhattasMartinEtAl13,DashtiStuart17}, 
we use    
$\Cprior = \mathcal{A}^{-2}$ with $\mathcal{A}$ taken to be a Laplacian-like
operator~\cite{Stuart10}. This ensures that $\Cprior$ is trace-class in two
and three space dimensions. 

We consider the case where $F$ represents 
a time-dependent PDE and we assume  
observations are taken at $\Ns$ sensor locations at $\Nt$ points in time.
Thus, the vector of experimental data $\obs$ is an element of $\reals^{\Ns\Nt}$.
An application of the parameter-to-observable map, 
$F:\mathcal{V} \to \reals^{\Ns\Nt}$,  
involves a PDE solve followed by an application of a spatiotemporal observation
operator.
We assume a Gaussian distribution on the experimental noise, $\vec{\eta} 
\sim \GM{\vec{0}}{\ncov}$.
Given this choice of the noise model---additive and Gaussian---the 
likelihood probability density function
(pdf) is
\[
\like(\obs \mid \ipar) \propto \exp\left\{ -\frac12
\big(F\ipar - \obs\big)^\top\ncov^{-1} \big(F\ipar - \obs\big)\right\}.
\]
Furthermore, the solution of the Bayesian inverse problem---the posterior
distribution law $\postm$---is given by the Gaussian measure
$\postm =
\GM{\iparmap}{\Cpost}$ with 
\begin{equation}\label{equ:mean-cov} 
\iparmap =  \Cpost(F^*\ncov^{-1}\obs + \Cprior^{-1}\iparpr), \qquad \Cpost =  (F^* \ncov^{-1} F + \Cprior^{-1})^{-1}.
\end{equation}
Note that here the posterior mean $\iparmap$ coincides with the 
maximum a posteriori probability 
(MAP) estimator.
We refer to~\cite{Stuart10} for further details.

\subsubsection*{Discretization}
We use a continuous Galerkin finite element discretization approach for the 
governing PDEs, 
as well as the inverse problem. 
Specifically, our discretization of the Bayesian inverse problem 
follows the developments in~\cite{Bui-ThanhGhattasMartinEtAl13}.
The discretized parameter space in the present case is
$\mathcal{V}_n = \reals^n$ equipped with the inner product $\mip{\cdot}{\cdot}$
and norm $\mnorm{\cdot} = \mip{\cdot}{\cdot}^{1/2}$, where 
$\M$ is the
finite element mass matrix. Note that $\mip{\cdot}{\cdot}$ is 
the discretized $L^2(\mathcal{D})$ inner product. The discretized 
parameter-to-observable map is a linear  transformation $\F:\mathcal{V}_n \to 
\reals^{\Ns\Nt}$ {with adjoint $\F^*$ discussed below}. The discretized 
prior measure $\GM{\dparpr}{\prior{}}$ is obtained
by discretizing the prior mean and covariance operator, and 
the discretized posterior measure is given by $\GM{\vec{m}_\text{post}}{\post}$, with
\[
\post = \left(\F^*\vec{\Gamma}^{-1}_\text{noise}\F + \prior{-1}\right)^{-1},
\quad 
\dparmap = \post\left(\F^*\vec{\Gamma}^{-1}_\text{noise}\obs+\prior{-1}\vec{m}_\text{pr}
\right).
\]
We point out that the operator $\F^*\vec{\Gamma}^{-1}_\text{noise}\F$ is the Hessian 
of the data-misfit cost functional whose minimizer is the MAP point,
and is thus referred to as the data-misfit Hessian; see e.g.,~\cite{a14infinite}.

It is important to keep track of the inner products in the domains and ranges
of the linear mappings, appearing in the above expressions, when computing the
respective adjoint operators.  For the readers' convenience, in
\cref{fig:adjoints}, we summarize the different spaces that are important, the
respective inner products, and the adjoints of the linear transformations
defined between these spaces.
\begin{figure}[ht]\centering
\includegraphics[width=.35\textwidth]{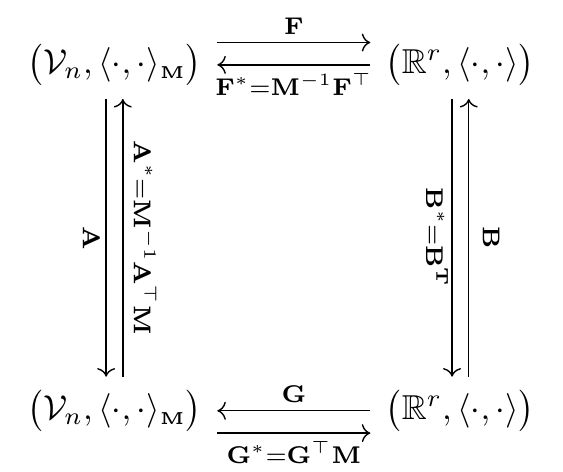}
\caption{Different spaces, their inner products, and the adjoints of linear transformations 
between them. Here $\ip{\cdot}{\cdot}$
denotes the Euclidean inner product and $\mip{\cdot}{\cdot}$ is the mass-weighted 
inner product.}
\label{fig:adjoints}
\end{figure}

Using the fact that $\prior{}$ is a self-adjoint operator on $\mathcal{V}_n$ 
and the form of the adjoint operator $\F^*$ (see \cref{fig:adjoints}), we can 
rewrite the expression for $\post$ as follows: 
\begin{equation*} 
\post = 
\prior{1/2}\M^{-1/2}\left(\eye+\FF^\top \noisevar\FF\right)^{-1}\M^{1/2}\prior{1/2}, 
\end{equation*}
with {
\begin{equation}\label{eqn:ff}
\FF=\F\prior{1/2}\M^{-1/2}.
\end{equation}
}
Note that {the operator $\HH = \FF^\top \noisevar\FF$ is a symmetric positive semidefinite matrix}, and is a similarity transform 
of the prior-preconditioned data-misfit Hessian 
$\prior{1/2}\F^*\vec{\Gamma}^{-1}_\text{noise}\F\prior{1/2}$. 
In many applications (including the application considered in \cref{sec:numerics}), $\HH$  
has rapidly decaying eigenvalues and therefore, it can be approximated by a
low-rank matrix. This is a key insight that will be exploited 
in our estimators for the OED criterion and its gradient.

\subsection{Randomized subspace iteration algorithm}\label{ssec:rand}
In this article, we develop and use randomized estimators to efficiently
compute the design criteria and their derivatives.  We first explain how to use
randomized algorithms for computing low-rank approximations of a 
symmetric positive semidefinite matrix
$\A\in\mb{R}^{n\times n}$. To draw
connection with the previous subsection, in our application  $\A$ will stand for $\HH$. We first draw a random Gaussian matrix $\boldsymbol\Omega \in
\mb{R}^{n\times \ell}$ (i.e., the entries are independent and identically
distributed standard normal random variables). We then perform $q$ steps of
subspace iteration on $\A$ with the starting guess $\boldsymbol\Omega$ to
obtain the matrix $\Y$. If, for example, the matrix
has rank $k \leq \ell$, or the eigenvalues decay sufficiently, then the range
of $\Y$ is a good approximation to the range of $\A$ under these suitable conditions. This is the main insight
behind randomized algorithms. 
We now show how to obtain a low-rank approximation of $\A$. A thin-QR factorization of $\Y$ is performed to obtain the matrix $\Q$, which has orthonormal columns. We then form the ``projected'' matrix $\T = \Q^\top\A\Q$ and obtain the low-rank approximation 
\begin{equation}
 \A \approx \Q \T\Q^\top.
 \label{approx}
 \end{equation}
This low-rank approximation can be manipulated in many ways depending on the desired application. An alternative low-rank approximation can be computed using the Nystr\"om approximation, see e.g.,~\cite{Halko2011structure}. 
\begin{algorithm}[h!]
\caption{Randomized subspace iteration.} 
\begin{algorithmic}
\REQUIRE $\A\in\mathbb{R}^{n\times n}$ with target rank $k$, 
oversampling parameter $p\geq 2$, with $\ell\equiv k+p \leq n$, and $q\geq 1$ 
(number of  subspace iterations). 
\ENSURE  $\Q \in \mb{R}^{n\times \ell}, \T\in\mathbb{R}^{\ell\times\ell}$.
\STATE \textbf{Draw}  a standard Gaussian random matrix $\randmatrix\in\reals^{n\times \ell}$. 
\STATE \textbf{Compute} $\Y=\A^q\boldsymbol\Omega$. 
\STATE \textbf{Compute} thin QR decomposition $\Y=\Q\R.$ 
\STATE \textbf{Compute} $\T=\Q^\top\A\Q$. 
\end{algorithmic}
\label{alg:randsubspace}
\end{algorithm}
In addition, once the matrix $\T$ is computed, it can be used in various ways. In
\cite{saibaba2016randomized}, $\trace{\T}$ was used as an estimator for
$\trace{\A}$, whereas $\log\det(\eye + \T)$ was used as estimator for
$\log\det(\eye + \A)$. The main idea behind these estimators is that the
eigenvalues of $\T$ are good approximations to the eigenvalues of $\A$, when
$\A$ is sufficiently low-rank or has rapidly decaying eigenvalues. Our
estimators for the A-optimal criterion and its gradient utilize the same idea
but in a slightly different form.

\subsection{A-optimal design of experiments}\label{ssec:oed}
As mentioned in the introduction, an experimental design refers to a placement 
of sensors used to collect measurement data for the purposes of parameter 
inversion. Here we describe the basic setup of the optimization problem
for finding an A-optimal design.

\subsubsection*{Experimental design and A-optimal criterion}
We seek to find an optimal subset of 
a network of $\Ns$ candidate sensor locations, which collect measurements
at $\Nt$ points in time. The experimental design is parameterized by a vector
of design weights $\vec{w} \in [0, 1]^\Ns$. 
In the present work, we use the A-optimal criterion to find the optimal design.  That is, we seek designs
that minimize the average posterior variance, as quantified by
$\trace{\post(\vec{w})}$.  
(The precise nature of the
dependence of $\post$ on $\vec{w}$ will be explained below.)
Note that 
$\trace{\post(\vec{w})} = \trace{ \post(\vec{w}) - \prior{}} + \trace{\prior{}}$
and thus, minimizing the trace of the posterior covariance operator is equivalent to minimizing
\begin{equation}\label{estobj}
    \aopt(\vec{w}) \equiv \trace{ \post(\vec{w}) - \prior{}}.
\end{equation}
This is the objective function we seek to minimize for finding A-optimal
designs. As seen below, this formulation of the A-optimal criterion 
is well suited for approximations via randomized matrix methods \cref{ssec:rand}.
Note also that minimizing $\aopt(\vec{w})$ amounts to maximizing
$\trace{\prior{}} - \trace{\post(\vec{w})}$, which can be thought of as a measure
of uncertainty reduction. 

We can also understand~\cref{estobj} from a decision theoretic 
point of view. It is well known~\cite{ChalonerVerdinelli95,
alexanderian2016bayesian,AttiaAlexanderianSaibaba18} that 
for Bayesian linear inverse problems with Gaussian prior and 
additive Gaussian noise models, $\trace{\post{}}$ 
coincides with Bayes risk (with respect to the $L^2$ loss function):
\[\begin{aligned}
    \trace{\post{}} = &\> \mathbb{E}_{\priorm} \Big(
    \mathbb{E}_{\like(\obs \mid \dpar)} \big(\mnorm{\dparmap - \dpar}^2 \big)
    \Big)\\ 
= &\> \int_{\mathcal{V}_n} \int_{\reals^{n_s n_t}} 
    \mnorm{\dparmap - \dpar}^2 \, \like(\obs \mid \dpar)d\obs\,\priorm(d\dpar) .
\end{aligned}\]
Here $\priorm$ denotes the discretized prior measure, 
$\priorm = \mathcal{N}(\vec{\dparpr}, \prior{})$.
Using
$$\int_{\mathcal{V}_n} \mnorm{\dparpr - \dpar}^2 \, \priorm(d\vec{m})
= \trace{\prior{}},$$
we see that 
\[
\trace{\post{} - \prior{}} = \mathbb{E}_{\priorm} \Big(\mathbb{E}_{\like(\obs \mid \dpar)} 
\big(\mnorm{\dparmap - \dpar}^2 -  
\mnorm{\dparpr - \dpar}^2 \big)
\Big);
\]
this provides an alternate interpretation of $\aopt(\cdot)$ as a Bayes 
risk with respect to a modified loss function.

\subsubsection*{Dependence of the A-optimal criterion to the design weights}
We follow the same setup as~\cite{a14infinite}.  Namely, the design weights enter
the Bayesian inverse problem through the data likelihood, resulting in the $\HH$ operator being dependent on $\vec{w}$.  To weight the spatiotemporal observations, the matrix $\W$ is defined as: 
\[
\W=\sum_{i=1}^{n_s}w_i\E_i, \quad \text{with} \quad
\E_i= \eye_{n_t}\otimes \mathbf{e}_i\mathbf{e}_i^\top,
\]
where $\otimes$  is the Kronecker product.  Therefore, $\HH(\vec{w})$ is expressed as follows:
\[
\HH(\vec{w}) = 
\FF^\top  \W^{1/2}\noisevar\W^{1/2}\FF,
\] 
We refer to~\cite{a14infinite} for details.
This results in the $\vec{w}$ dependent posterior covariance operator:
\[ 
\post(\vec{w}) 
= \prior{1/2}\M^{-1/2}\left(\HH(\vec{w}) + \eye\right)^{-1}\M^{1/2}
\prior{1/2}.
\]
In our formulation, 
we assume uncorrelated observations across sensor locations and time which implies $\vec{\Gamma}_\text{noise}$ is a $n_s n_t \times n_s n_t$ block diagonal matrix, 
with $n_t$ blocks of the form $\diag(\sigma_1^2, 
\ldots, \sigma_{n_s}^2)$; here $\sigma_i^2$, $i = 1, \ldots, n_s$, 
denote the measurement noise at individual sensor locations. 
Using this structure for $\vec{\Gamma}_\text{noise}$, we define 
\begin{equation}
\label{Wnoise}
\Wn \equiv \W^{1/2}\noisevar\W^{1/2} =
\sum_{i=1}^{n_\text{sens}}w_i\E_i^\text{noise},
\end{equation}
with $\E_i^\text{noise} = \sigma_i^{-2}\eye_{n_\text{time}}\otimes \mathbf{e}_i\mathbf{e}_i^\top$.
Thus, we have $\HH(\vec{w}) = \FF^\top\Wn\FF$ and the A-optimal criterion can be written as 
\begin{equation}\label{equ:criterion}
\begin{aligned}
\aopt{(\vec{w})} &= 
\trace{\prior{1/2}\M^{-1/2}\big[\big(\HH(\vec{w}) + \eye\big)^{-1} - \eye\big] \M^{1/2}\prior{1/2} }
\\
&= \trace{ \big[\big(\HH(\vec{w}) + \eye\big)^{-1} - \eye\big] \MMprior}, 
\end{aligned}
\end{equation}
with {
\begin{equation} \label{eqn:Z}
\MMprior\equiv\M^{1/2}\prior{}\M^{-1/2}.
\end{equation}
}

Anticipating that we will use a gradient-based solver for solving \cref{opt},
we also need the gradient of $\aopt{(\vec{w})}$ which we now derive. Using
Theorems B.17 and B.19 in~\cite{Ucinski05}, the partial derivatives of
\cref{equ:criterion} with respect to $w_j$, $j = 1,\dots, n_s$, are
\begin{equation}
\label{equ:gradient}
\begin{aligned}
\partial_j \aopt(\vec{w}) 
=-\trace{(\eye+\HH(\vec{w}))^{-1}\partial_j \HH(\vec{w})(\eye+\HH(\vec{w}))^{-1}\MMprior}.
\end{aligned}
\end{equation}
(We have used the notation $\partial_j$ to denote $\frac{\partial}{\partial w_j}$.)
Note that using the definition of $\HH(\vec{w})$, we have $\partial_j \HH(\vec{w})=\FF^\top\E_j^\text{noise}\FF$, $j=1,\ldots,n_s$.

\subsubsection*{The optimization problem for finding an A-optimal design}
We now specialize the optimization problem \cref{equ:basic_prob}
to the case of A-optimal sensor placement for linear inverse problems
governed by time-dependent PDEs:
\begin{equation}
\min_{\vec{w}\in[0, 1]^{\Ns}}\aopt(\vec{w})+\gamma P(\vec{w}).
\label{opt}
\end{equation}
As explained before, to enable efficient solution methods for the above
optimization problem we need (i) a numerical method for fast computation of
$\aopt(\vec{w})$ and its gradient, and (ii) a choice of penalty 
function that promotes sparse and binary weights. 
The former is facilitated by the randomized subspace iteration 
approach outlined earlier (see \cref{sec:criterion}), and 
for the latter we present an approach based
on rewighted $\ell_1$ minimization (see \cref{sec:reweightl1}).

%
%
\section{Efficient computation of A-optimal criterion and its gradient}\label{sec:criterion}

The  computational cost of solving \cref{opt} is dominated by the PDE solves
required in OED objective and gradient evaluations; these operations need to be
performed repeatedly when using an optimization algorithm for solving
\cref{opt}.  Therefore, to enable computing A-optimal 
designs for large-scale applications, efficient methods
for objective and gradient computations are needed. 
In this section, we derive efficient and accurate randomized
estimators for \cref{equ:criterion} and \cref{equ:gradient}. The proposed estimators are
matrix-free---they require only applications of the (prior-preconditioned)
forward operator $\FF$ and its adjoint on vectors. Moreover, the computational
cost of computing these estimators does not increase with the discretized
parameter dimension. This is due to the fact that our estimators exploit the
low-rank structure of $\HH(\vec{w})$, a problem property that is independent of the
choice of discretization.

We introduce our proposed randomized estimators for the A-optimal design
criterion $\aopt{(\vec{w})}$ and its gradient $\nabla\aopt{(\vec{w})}$ in
\cref{sec:aopt_estimators}. Additionally, we present a detailed computational
method for computing the proposed estimators.  We analyze the errors associated
with our proposed estimators in \cref{ssec:error}.

\subsection{Randomized estimators for $\aopt(\vec{w})$ and its gradient}
\label{sec:aopt_estimators}

Consider the low-rank approximation of $\HH(\vec{w})$ given by
$\hatHH(\vec{w})=\Q(\vec{w})\T(\vec{w})\Q^\top(\vec{w})$, with 
$\Q(\vec{w})$ and $\T(\vec{w})$ computed using \cref{alg:randsubspace}.
Replacing
$\HH(\vec{w})$ by its approximation and using the cyclic property
of the trace, we obtain the estimator for the A-optimal criterion \cref{equ:criterion}:
\begin{equation}\label{aoptest}
\aoptrand{(\vec{w};\ell)}=\trace{\left((\eye+\hatHH(\vec{w}))^{-1}-\eye\right) \MMprior},
\end{equation}
where $\MMprior$ is as in~\cref{eqn:Z}.

To derive an estimator for the gradient, once again, we 
replace $\HH(\vec{w})$ with its low-rank approximation $\hatHH(\vec{w})$ in 
\cref{equ:gradient} to obtain 
\begin{equation}
\gradaoptrand{(\vec{w};\ell)} = -\trace{(\eye+\hatHH(\vec{w}))^{-1}\FF^\top\E_j^\text{noise}\FF(\eye+\hatHH(\vec{w}))^{-1}\MMprior}, 
\label{estgrad}
\end{equation}
for $j=1,\dots n_s$.

\subsubsection*{Computational procedure}
First, we discuss computation of the A-optimal criterion estimator using
\cref{alg:randsubspace}.  Typically, the algorithm can be used
with $q=1$, due to rapid decay of eigenvalues of $\HH(\vec{w})$. In this 
case, \cref{alg:randsubspace} requires $2\ell$ applications of $\HH(\vec{w})$.
Since each application of
$\HH(\vec{w})$ requires one $\FF$ apply (forward solve) and one $\FF^\top$ apply
(adjoint solve), computing $\hatHH(\vec{w})$ requires $4\ell$ PDE solves.  Letting the
spectral decomposition of the $\ell \times \ell$ matrix $\T(\vec{w})$ be given by
$\T(\vec{w})=\U(\vec{w})\eig_\T(\vec{w}) \U(\vec{w})^\top$ and denoting $\V(\vec{w})\equiv\Q(\vec{w})\U(\vec{w})$, we have  $\hatHH(\vec{w})=\V(\vec{w})\eig_\T(\vec{w})
\V^\top(\vec{w})$.  Applying the Sherman--Morrison--Woodbury formula
\cite{meyer2000matrix} and the cyclic property of the trace to \cref{aoptest},
we obtain
\begin{equation}
\aoptrand{(\vec{w})}= -\trace{\D_\T(\vec{w}) \V^\top(\vec{w})\MMprior \V(\vec{w})},
\label{expandedobj}
\end{equation}
where $\D_\T(\vec{w})=\eig_\T(\vec{w})(\eye+\eig_\T(\vec{w}))^{-1}$.   
To simplify notation, the dependence of $\vec{w}$ is suppressed for the operators used in computing the estimators for the remainder of the article; however, the notation is retained for $\HH(\vec{w})$ and $\hatHH(\vec{w})$.

Next, we describe computation of the gradient estimator \cref{estgrad}. Here 
we assume $n_sn_t \leq n$; the extension to the case $ n_sn_t > n$ is straightforward and is omitted. Again, using the Woodbury formula and cyclic property of the trace, we rewrite \cref{estgrad} as 
\begin{equation}
\gradaoptrand{(\vec{w};\ell)} = -\trace{\FF(\eye - \V\D_\T\V^\top)\MMprior(\eye - \V\D_\T\V^\top)\FF^\top\E_j^\text{noise}}
\label{gradexpand}
\end{equation}
for $j=1,\dots,n_s$. Expanding this expression, we obtain 
\begin{equation}
\label{alt}
\begin{aligned}
\widehat{\partial_j\Phi}_\text{aopt}(\vec{w}) 
&= -\trace{\MMprior\FF^\top\E_j^\text{noise}\FF} + 2\trace{\FF\V\D_\T\V^\top\MMprior\FF^\top\E_j^\text{noise}} -\\
&\quad\trace{\FF\V\D_\T\V^\top\MMprior\V\D_\T\V^\top\FF^\top\E_j^\text{noise}}.
\end{aligned}
\end{equation}
Note that the first term $s_j=-\trace{\MMprior\FF^\top\E_j^\text{noise}\FF} $ in \cref{alt} does not depend on the design $\vec{w}$, for $j=1,\dots,n_s$.  As a result, this term can be precomputed and used in subsequent function evaluations. We expand $s_j$ to be
\[\begin{aligned}
s_j &=-\trace{\MMprior\FF^\top\E_j^\text{noise}\FF}\\
&=-\sum_{k=1}^{n_sn_t}\left(\FF^\top(\E_j^\text{noise})^{1/2}\widehat{\vec{e}}_k\right)^\top\MMprior\left(\FF^\top(\E_j^\text{noise})^{1/2}\widehat{\vec{e}}_k\right),
\end{aligned}\]
where $\widehat{\vec{e}}_k$ is the $k^\text{th}$ column of the identity matrix of size $n_sn_t$.  Because there are only $n_t$ columns of $\E_j^\text{noise}$ with nonzero entries, the total cost to precompute $s_j$ for $j=1,\dots,n_s$ is $n_sn_t$ PDE solves.  
 
To compute the remaining terms in \cref{alt}, we exploit the fact that
$\V$ has $\ell$ columns.  Notice all the other occurrences of $\FF$ and
$\FF^\top$ occur as a combination of $\FF\V$ and $\FF\MMprior\V$ (or of their
transposes).  Both of these terms require $\ell$ PDE solves to compute.
As a result, the total cost to evaluate $\aoptrand{(\vec{w})}$ and
$\widehat{\nabla\Phi}_\text{aopt}(\vec{w};\ell)$ is
$4\ell$ PDE solves to apply \cref{alg:randsubspace} and $2\ell$ PDE solves to
compute $\FF\V$ and $\FF\MMprior\V$. We detail the steps for computing our 
estimators for A-optimal criterion and its gradient in~\cref{alg:randobjgrad}.

\begin{algorithm}[ht!]
\caption{Randomized method for computing $\aoptrand{(\vec{w};\ell)}$ and $\widehat{\nabla \Phi}_\text{aopt}(\vec{w};\ell)$.} 
\begin{algorithmic}[1]
\REQUIRE Target rank $k$, oversampling parameter $p \geq 0$, design $\vec{w}$, and $s_j$ for $j=1,\dots,n_s$.
\ENSURE  OED objective $\aoptrand{(\vec{w};\ell)}$ and gradient $\widehat{\nabla\Phi}_\text{aopt}(\vec{w};\ell)$.
\STATE Apply \cref{alg:randsubspace} with $\ell = k + p$ and $q=1$ to obtain $\T \in \reals^{\ell \times \ell}$ and $\Q \in \reals^{n\times \ell}$. 
\STATE Compute eigendecomposition $[\U,\eig_\T]$ of $\T$. Let $\D_\T=\eig_\T(\eye+\eig_\T)^{-1}$.
\FOR{$i=1$ to $\ell$}
   \STATE Compute $\vec{v_i} \!=\! \Q \vec{u_i}$, where $\vec{u_i}$ are the columns of $\U$. 
\ENDFOR
\STATE Compute
$$\aoptrand{(\vec{w};\ell)} = -\sum_{i=1}^\ell d_i\vec{v}_i^\top \MMprior\vec{v}_i,$$
 where $d_i$ is the $i^\text{th}$ diagonal of $\D_\T$.
 \FOR{$i=1$ to $\ell$}
 \STATE Compute $\vec{a}_i=\FF\vec{v}_i$ and $\vec{b}_i=\FF\MMprior\vec{v}_i$.
 \ENDFOR
\FOR{$j=1$ to $n_s$}
\STATE Compute 
\[
\widehat{\partial_j \Phi}_\text{aopt}(\vec{w};\ell) = s_j+2\sum_{i=1}^\ell d_i\vec{b}_i^\top\E_j^\text{noise}\vec{a}_i - \sum_{i=1}^\ell \sum_{k=1}^\ell d_id_k\vec{a}_i^\top\E_j^\text{noise}\vec{a}_k\vec{v}_k^\top\MMprior\vec{v}_i.
\]
\ENDFOR
\STATE Return $\aoptrand{(\vec{w};\ell)}$ and $\widehat{\nabla\Phi}_\text{aopt}(\vec{w};\ell)=[\widehat{\partial_1 \Phi}_\text{aopt}(\vec{w};\ell),\dots,\widehat{\partial_{n_s} \Phi}_\text{aopt}(\vec{w};\ell)]^\top$.
\end{algorithmic}
\label{alg:randobjgrad}
\end{algorithm}

\subsubsection*{Alternative approaches and summary of computational cost}
A closely related variation of \cref{alg:randobjgrad} is obtained by replacing
step 1 of the algorithm (i.e., randomized subspace iteration) by the
solution of an eigenvalue problem to compute the dominant eigenvalues of
$\HH(\vec{w})$ ``exactly''. We refer to this method as Eig-$k$, where
$k$ is the target rank of $\HH(\vec{w})$.  
This idea was explored for computing Bayesian D-optimal designs in
\cite{alexanderian2018dopt}.  The resulting cost is similar to that of the
randomized method: it would cost $\mathcal{O}(k)$ PDE solves per iteration 
to compute the spectral decomposition of $\HH(\vec{w})$, plus  
$\min\{n_sn_t,n\}$ PDE solves to precompute $s_j$ for $j=1,\dots,n_s$.  While both the randomized and Eig-$k$ methods provide a viable scheme for computing the A-optimal criteria, our randomized method can exploit parallelism to lower computational costs.  Each matrix-vector application  with $\HH(\vec{w})$ in \cref{alg:randsubspace} can be computed in parallel.  However, if accurate eigenpairs of $\HH(\vec{w})$ are of importance to the problem, one can choose to use the Eig-$k$ approach at the cost of computing a more challenging problem.

Another possibility suitable for problems where the forward model does not
depend on the design (as is the case in the present work), is to precompute a
low-rank SVD of $\FF$, which can then be applied as necessary to compute the
A-optimal criterion and its gradient.  This \emph{frozen forward operator
approach} has been explored in \cite{a14infinite,HaberMagnantLuceroEtAl12} for
the A-optimal criterion and in \cite{alexanderian2018dopt} for the D-optimal
criterion.  The resulting PDE cost of precomputing a low-rank approximation of
$\FF$ is $\mathcal{O}(k)$, with $k$ indicating the target rank.  The Frozen
method is beneficial as no additional PDE solves are required when applying
\cref{alg:randobjgrad}; however, this approach would not favor problems
where $\FF$ depends on $\vec{w}$, nor can the modeling errors associated with
the PDE be controlled in subsequent evaluations without the construction of
another operator.

Finally, if the problem size is not too large (i.e., in small-scale
applications), one could explicitly construct the forward operator $\FF$. This enables exact
(excluding floating point errors) computation of the objective and its gradient.
The total cost for evaluating $\FF$ involves an upfront cost of $\min\{n_sn_t,n\}$ PDE solves.
We summarize the computational cost of \cref{alg:randobjgrad} along with 
the other alternatives mentioned above in \cref{table:cost}.  
\begin{table}[h!]
\begin{center}
\begin{tabular}{c|c|c|c}
Method & $\Phi_\text{aopt}(\vec{w})$ and $\nabla\Phi_\text{aopt}(\vec{w})$ &  Precompute & Storage Cost\\
\hline
``Exact'' & $-$ & $\min\{n_sn_t,n\}$ & $nn_sn_t$\\
Frozen & $-$ & $\mathcal{O}(k)$ & $(2n+1)k$\\
Eig-$k$ & $\mathcal{O}(k)$ & $\min\{n_sn_t,n\}$ & $n_s$\\
Randomized  & $2(q+2)(k+p)$ & $\min\{n_sn_t,n\}$ & $n_s$
\end{tabular}
\caption{Computational cost measured in terms of PDE solves for different methods in computing  $\Phi_\text{aopt}(\vec{w})$ and $\nabla\Phi_\text{aopt}(\vec{w})$.  Typically, $q=1$ in \cref{alg:randsubspace} is sufficient.}
\label{table:cost}
\end{center}
\end{table}

To summarize, the randomized methods for computing OED objective and its
gradient present several attractive features; our approach is well suited to
large-scale applications; it is matrix free, simple to implement and
parallelize, and exploits low-rank structure in the inverse problem. Moreover,
as is the case with the Eig-k approach, the performance of the randomized
subspace iteration does not degrade as the dimension of the parameter increases
due to mesh refinement.  This is the case because the randomized subspace
iteration relies on spectral properties of the prior-preconditioned data misfit
Hessian---a problem structure that is independent of discretization.

\subsection{Error Analysis}\label{ssec:error}
Here we analyze the error associated with our estimators computed 
using \cref{alg:randobjgrad}.  
For fixed $\vec{w}\in[0,1]^{n_s}$, since $\HH(\vec{w}) \in \mb{R}^{n\times n}$ is symmetric
positive semidefinite, we can order its eigenvalues as 
$\lambda_1\geq\lambda_2\geq\dots\geq\lambda_n\geq 0$. Suppose $\lambda_1,
\ldots, \lambda_k$ are the dominant eigenvalues of $\HH(\vec{w})$, we define $\eig_1 =
\diag(\lambda_1,\dots,\lambda_k)$ and $\eig_2=
\diag(\lambda_{k+1},\dots,\lambda_n)$ and we assume that the eigenvalue ratio satisfies 
$$\gamma_k = \|\eig_2\|_2\|\eig_1^{-1}\|_2 = \frac{\lambda_{k+1}}{\lambda_k} < 1.$$

We now present the error bounds for the objective function and its gradient. To this end, we define the constant $C$ as 
\begin{equation}
 C \equiv \frac{e^2(k+p)}{(p+1)^2}\left( \frac{1}{2\pi(p+1)}\right)^{\frac{2}{p+1}} \left( \mu + \sqrt{2}\right)^2 \left( \frac{p+1}{p-1}\right),
\label{constant}
\end{equation}
with  $r = \text{rank}(\HH(\vec{w}))$ and  $\mu \equiv \sqrt{r-k} + \sqrt{k+p}$.

\begin{theorem}
\label{objerror}
Let $\aoptrand{(\vec{w};\ell)}$ and $\widehat{\nabla\Phi}_\text{aopt}(\vec{w};\ell)$ be approximations of the A-optimal objective function $\aopt{(\vec{w})}$ and its gradient $\nabla\aopt{(\vec{w})}$, respectively, computed using \cref{alg:randobjgrad} for fixed $\vec{w}\in[0,1]^{n_s}$.  Recall that $k$ is the target rank of $\HH(\vec{w})$, $p \geq 2$ is the oversampling parameter such that $k+p \leq n$, and $q \geq 1$ is the number of subspace iterations.  Assume that $\gamma_k < 1$.  Then, with $f = x/(1+x)$ 
\begin{equation}\label{equ:aopt_obj_bound}
\expectation{|\aopt{(\vec{w})}-\aoptrand{(\vec{w};\ell)}|}\leq \|\MMprior\|_2\left(\trace{f(\eig_2)}  +  \trace{f(\gamma_k^{2q-1}C\eig_2)}\right).
\end{equation} 
Furthermore, with $\P_j = \FF^\top\E_j^\text{noise} \FF$, for $j=1,\dots, n_s$,
\begin{equation}\label{equ:aopt_grad_bound}
\expectation{|\partial_j \aopt{(\vec{w})}-\partial_j \aoptrand{(\vec{w};\ell)}|}\leq  \>  2\|\MMprior\|_2\|\P_j \|_2\left(\trace{f(\eig_2)} + \trace{f(\gamma_k^{2q-1}C\eig_2)}\right).
\end{equation}
 \end{theorem}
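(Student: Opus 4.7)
The starting observation is the functional-calculus identity $\eye - (\eye+\mathbf{A})^{-1} = f(\mathbf{A})$ with $f(x)=x/(1+x)$, which recasts both quantities as
\[
\aopt{(\vec{w})} = -\trace{f(\HH(\vec{w}))\MMprior},\qquad \aoptrand{(\vec{w};\ell)} = -\trace{f(\hatHH(\vec{w}))\MMprior},
\]
so the objective error becomes $|\trace{[f(\HH)-f(\hatHH)]\MMprior}|$. My workhorse is the matrix H\"older estimate $|\trace{\mathbf{D}\MMprior}|\leq \|\MMprior\|_2\|\mathbf{D}\|_{S_1}$ for symmetric $\mathbf{D}$ (where $\|\cdot\|_{S_1}$ denotes the nuclear/trace norm), which sharpens to $\|\MMprior\|_2\trace{\mathbf{D}}$ whenever $\mathbf{D}\succeq 0$.

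\textbf{Splitting the error.} Using the spectral decomposition $\HH = \mathbf{U}_1\eig_1\mathbf{U}_1^\top + \mathbf{U}_2\eig_2\mathbf{U}_2^\top$, introduce the deterministic best rank-$k$ truncation $\HH^{(k)} := \mathbf{U}_1\eig_1\mathbf{U}_1^\top$. The Schatten-$1$ triangle inequality then yields
\[
\mathbb{E}\|f(\HH) - f(\hatHH)\|_{S_1}
\leq \|f(\HH) - f(\HH^{(k)})\|_{S_1} + \mathbb{E}\|f(\HH^{(k)}) - f(\hatHH)\|_{S_1}.
\]
The first summand is deterministic and, since $f(0)=0$, simplifies exactly to $\mathbf{U}_2 f(\eig_2)\mathbf{U}_2^\top\succeq 0$ with trace $\trace{f(\eig_2)}$; this supplies the first term of~\cref{equ:aopt_obj_bound}. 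For the second summand, the resolvent identity
\[
f(\HH^{(k)}) - f(\hatHH) = (\eye+\hatHH)^{-1}(\HH^{(k)}-\hatHH)(\eye+\HH^{(k)})^{-1}
\]
reduces matters (both outer resolvents have spectral norm $\leq 1$) to bounding $\mathbb{E}\|\HH^{(k)}-\hatHH\|_{S_1}$. Here I would invoke the randomized subspace-iteration machinery of~\cite{saibaba2016randomized}: Cauchy interlacing gives $\widehat\lambda_j\leq\lambda_j$ for the eigenvalues of $\T$, and structural bounds on the expected eigenvalue gaps scale with $\gamma_k^{2q-1}$ and Gaussian moments that assemble into the constant~\cref{constant}. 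Combining these with the scalar identity $f(\lambda)-f(\widehat\lambda) = (\lambda-\widehat\lambda)/[(1+\lambda)(1+\widehat\lambda)]$ produces the saturated bound $\trace{f(\gamma_k^{2q-1}C\eig_2)}$, which is the second term of~\cref{equ:aopt_obj_bound}.

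\textbf{Gradient bound.} For~\cref{equ:aopt_grad_bound} I would write
\[
\partial_j\aopt{(\vec{w})} - \partial_j\aoptrand{(\vec{w};\ell)} = -\trace{[\mathbf{A}_1\P_j\mathbf{A}_1 - \mathbf{A}_2\P_j\mathbf{A}_2]\MMprior},
\]
with $\mathbf{A}_1 = (\eye+\HH)^{-1}$ and $\mathbf{A}_2 = (\eye+\hatHH)^{-1}$, and decompose the bracket symmetrically as $(\mathbf{A}_1-\mathbf{A}_2)\P_j\mathbf{A}_1 + \mathbf{A}_2\P_j(\mathbf{A}_1-\mathbf{A}_2)$. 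The resolvent difference $\mathbf{A}_1-\mathbf{A}_2 = f(\hatHH) - f(\HH)$ is exactly the matrix already controlled above, $\|\mathbf{A}_i\|_2\leq 1$, and $\|\P_j\|_2$ pulls out through H\"older; the two symmetric pieces account for the factor of $2$ and the $\|\P_j\|_2$ appearing in the stated bound.

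\textbf{Main obstacle.} The delicate step is the randomized piece: translating the per-eigenvalue gap estimates from randomized subspace iteration into a bound that preserves the $f$-composition of the scaled tail eigenvalues, rather than degrading to the looser linearization $\gamma_k^{2q-1}C\trace{\eig_2}$. This requires carefully tracking how the $(1+\lambda)(1+\widehat\lambda)$ denominators interact with the stochastic bounds, so that the Gaussian-matrix moments assemble cleanly into the explicit constant~\cref{constant} while the saturating behaviour of $f$ is retained.
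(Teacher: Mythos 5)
Your overall architecture is right---peeling off $\|\MMprior\|_2$, isolating a deterministic tail term $\trace{f(\eig_2)}$, and the symmetric two-term decomposition for the gradient (with $\|(\eye+\HH(\vec{w}))^{-1}\|_2,\ \|(\eye+\hatHH(\vec{w}))^{-1}\|_2\le 1$ producing the factor $2\|\P_j\|_2$) is essentially the paper's argument for \cref{equ:aopt_grad_bound}. But the step you flag as the ``main obstacle'' is not a technicality to be tidied up later; it is the entire content of the theorem, and the route you sketch does not reach the stated bound. Passing through
\[
\|f(\HH^{(k)})-f(\hatHH)\|_{S_1}\ \le\ \|(\eye+\hatHH)^{-1}\|_2\;\|\HH^{(k)}-\hatHH\|_{S_1}\;\|(\eye+\HH^{(k)})^{-1}\|_2
\]
discards the $(1+\lambda)^{-1}$ damping entirely, so at best you would land on a bound proportional to $\gamma_k^{2q-1}C\,\trace{\eig_2}$ rather than the strictly smaller $\trace{f(\gamma_k^{2q-1}C\eig_2)}$; proving the linearized bound does not prove the theorem. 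Moreover, even that weaker endpoint is not secured by the machinery you invoke: $\HH^{(k)}-\hatHH=\HH^{(k)}-\Q\Q^\top\HH(\vec{w})\Q\Q^\top$ is indefinite, and the interlacing/eigenvalue-gap results of \cite{saibaba2016randomized} control $\sum_i(\lambda_i-\tilde\lambda_i)$, i.e.\ a trace of a difference, not the Schatten-$1$ norm of this matrix.

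The paper's proof (\cref{thm:main}) never leaves the cone where $\trace{f(\cdot)}$ applies, and the three ingredients you are missing are: (i) $\hatHH$ has the same nonzero eigenvalues as $\R=\HH^{1/2}\Q\Q^\top\HH^{1/2}\preceq\HH$, so the whole error collapses to $\trace{f(\HH-\R)}$ for a \emph{single positive semidefinite} matrix $\HH-\R$---no triangle inequality and no separate rank-$k$ truncation are needed; (ii) the Rotfel'd-type block inequality $\trace{f(\A)}\le\trace{f(\A_{11})}+\trace{f(\A_{22})}$ for nonnegative concave $f$ (\cref{lem:trace}), which is what creates the two-term structure of the bound while retaining the saturation of $f$; the diagonal blocks are then controlled by multiplicative singular-value inequalities so that $f$ ends up evaluated at $\gamma_k^{2q-1}\|\widehat\randmatrix_2\randmatrix_1^\dagger\|_2^2\,\eig_2$; and (iii) Jensen's inequality (concavity of $f$ once more) to move the expectation over the Gaussian test matrix inside $f$, converting $\expectation{\|\widehat\randmatrix_2\randmatrix_1^\dagger\|_2^2}\le C$ into $\trace{f(\gamma_k^{2q-1}C\eig_2)}$. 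Without (i)--(iii) the gradient bound is also unproven, since after your (correct) decomposition it reduces to the same expectation of $\trace{\D}$ with $\D=(\eye+\hatHH(\vec{w}))^{-1}-(\eye+\HH(\vec{w}))^{-1}$.
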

\begin{proof} See \cref{proofobjerror}.
\end{proof}

In \cref{objerror}, the estimators are unbiased when the target
rank equals the rank of $\HH(\vec{w})$.  If the eigenvalues decay rapidly, the bounds suggest
that the estimators are accurate. Recall that $\text{rank}(\HH(\vec{w}))\leq 
\min\{n_sn_t,n\}$ is the number of nonzero eigenvalues. Consequently, it is
seen that the bounds are independent of the dimension of the discretization
$n$.

%
%
\section{An optimization framework for finding binary designs}\label{sec:reweightl1}

We seek A-optimal designs by solving an optimization problem of the form, 
\begin{equation}
\label{opt_relaxed}
\min_{\vec{w}\in \mathcal{W}}
\Phi(\vec{w})+\gamma P(\vec{w}), \quad \mathcal{W} = [0, 1]^{n_s},
\end{equation}
where the design criterion $\Phi$ is either the A-optimal criterion $\aopt(\vec{w})$ or the modified A-optimal criterion $\moda(\vec{w})$ (see \cref{sec:mod_a}). In the previous sections, we laid out an efficient
framework for computing accurate approximations to the A-optimal criterion and its gradient. We now discuss the choice of the penalty
term $P$ and the  algorithm for solving the optimization problem. 

The choice of the penalty term must satisfy two conditions: enforcing 
sparsity, measured
by the number of nonzeros of the design vector, and binary designs, i.e.,
designs vectors whose entries are either $1$ or $0$. 
One possibility for the penalty function is
the $\ell_0$-``norm'', $P_{\ell_0}(\vec{w}) = \| \vec{w} \|_0$, which measures
the number of nonzero entries in the design.  However, the resulting
optimization problem is challenging to solve due to its combinatorial
complexity. A common practice is to replace the $\ell_0$-``norm'' penalty by
the $\ell_1$-norm, $P_{\ell_1}(\vec{w}) = \| \vec{w} \|_1$.  The penalty
function  $P_{\ell_1}$ has desirable features: it is a convex penalty function
that promotes sparsity of the optimal design vector $\vec{w}$.
However, the resulting design is
sparse but not necessarily binary and  additional post-processing in the form
of thresholding is necessary to enforce binary designs.

In what follows, we introduce a suitable penalty function that enforces both
sparsity and binary designs and an algorithm for solving the OED optimization
problem based on the MM approach.  The resulting algorithm takes the form of a
reweighted $\ell_1$-minimization algorithm~\cite{Candes2008sparsity}.

\subsection{Penalty functions} We propose the following penalty function \begin{equation}
P_{\epsilon}(\vec{w})=\sum_{i=1}^{n_s}\displaystyle{\frac{|w_i|}{|w_i|+\epsilon}},
\quad \vec{w} \in \reals^{n_s},
\label{pep}
\end{equation}
for a user-defined parameter $\epsilon>0$. This penalty function approximates $P_{\ell_0}$ for small values of
$\epsilon$; however, as $\epsilon$ becomes smaller the corresponding optimization problem becomes harder. 
To illustrate the choice of penalty functions, in~\cref{fig:norm}, we plot $P_{0.05}$ 
along with
$P_{\ell_0}$ and $P_{\ell_1}$, with $n_s = 1$. Using $P_\epsilon$ in the OED problem leads to the optimization problem, 
\begin{equation}
\min_{\vec{w}\in\mathcal{W}}\Phi(\vec{w})+\gamma P_\epsilon(\vec{w}).
\label{eqn:pepsilon}
\end{equation}
In \cref{eqn:pepsilon}, the absolute values in definition of 
$P_\epsilon$ can be dropped since we limit
the search for optimal solutions in $\WW$.  Since $P_{\epsilon}(\vec{w})$ is
concave, \cref{eqn:pepsilon} is a non-convex optimization problem.
To tackle this, we adopt the majorization-minimization (MM) approach.

\begin{figure}[!ht]
\begin{center}
  \includegraphics[width=0.5\textwidth]{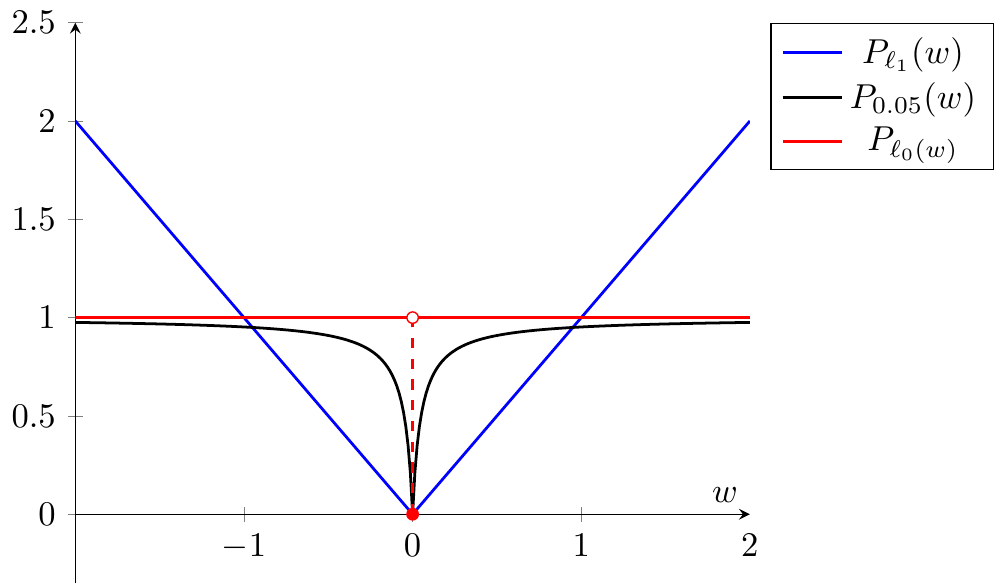}
\end{center}
\caption{Different choices of penalty functions with $\Ns = 1$.}
\label{fig:norm}
\end{figure}

\subsection{MM approach and reweighted $\ell_1$ algorithm} The idea behind the
MM approach is to solve a sequence of optimization problems whose solutions
converge to that of the original problem~\cite{hl2004MMAlgorithms,lange2016mm}.
This sequence is generated by a carefully constructed surrogate that satisfies
two properties---the surrogate must majorize the objective function for all
values, and the surrogate must match the objective function  at the current
iterate. More specifically, suppose
\[ J(\vec{w}) = \Phi(\vec{w}) + \gamma \sum_{i=1}^{n_s}\frac{w_i}{w_i+\epsilon}. \]
Then the surrogate function  $g(\vec{w}|\vec{w}^{(m)})$ at the current iterate
$\vec{w}^{(m)}$ must satisfy
\[ 
\begin{aligned} 
   g(\vec{w}|\vec{w}^{(m)})    &\geq  J(\vec{w}) 
      \quad \forall\vec{w} \in \mathcal{W}, \\
g(\vec{w}^{(m)}|\vec{w}^{(m)}) &= J(\vec{w}^{(m)}). 
\end{aligned}
\] 
Granted the existence of this surrogate function, to find the next iterate $\vec{w}^{(m+1)}$ we solve the optimization problem 
\begin{equation}\label{eqn:next1} \vec{w}^{(m+1)} = \argmin_{\vec{w} 
\in \mathcal{W}} g(\vec{w}|\vec{w}^{(m)}). \end{equation}
To show the objective function decreases at the next iterate, observe that the next iterate $\vec{w}^{(m+1)}$ stays within the feasible region and use the two properties of the surrogate function 
\[ J(\vec{w}^{(m+1)}) \leq g(\vec{w}^{(m+1)}|\vec{w}^{(m)}) \leq g(\vec{w}^{(m)}|\vec{w}^{(m)})= J(\vec{w}^{(m)}). \]

To construct this surrogate function, we use the fact that a concave function is below its tangent~\cite[Equation (4.7)]{lange2016mm}. Applying this to our concave penalty $P_\epsilon(\vec{w})$, we have 
\[ 
  P_\epsilon(\vec{w}) \leq P_\epsilon(\vec{w}^{(m)}) + (\vec{w} - \vec{w}^{(m)})^\top \nabla_{\vec{w}} P_\epsilon(\vec{w}^{(m)}), \quad 
\text{for all } \vec{w} \in \mathcal{W}.  \]
With this majorization relation, we define the surrogate function to be 
\[ g(\vec{w}|\vec{w}^{(m)}) = \Phi(\vec{w}) + \gamma\left(P_\epsilon(\vec{w}^{(m)}) +   (\vec{w} - \vec{w}^{(m)})^\top \nabla_{\vec{w}} P_\epsilon(\vec{w}^{(m)})\right)\] 
By dropping the terms that do not depend on $\vec{w}$, it can be readily verified that~\cref{eqn:next1} can be replaced by the equivalent problem 
\begin{equation}\label{eqn:next2}
\begin{aligned}
\vec{w}^{(m+1)} = &\> \argmin_{\vec{w} \in \mathcal{W}} \Phi(\vec{w}) + \gamma \sum_{i=1}^{n_s}\frac{\epsilon w_i}{(w_i^{(m)}+\epsilon)^2} \\
= & \>  \argmin_{\vec{w} \in \mathcal{W}} \Phi(\vec{w}) + \gamma \| \vec{R}(\vec{w}^{(m)})  \vec{w}\|_1  ,
\end{aligned}
\end{equation}
where $\vec{R}(\vec{w}) =
\text{diag}\left(\frac{\epsilon}{(w_1+\epsilon)^2},\dots,\frac{\epsilon}{(w_{n_s}+\epsilon)^2}\right)$.  We see that \cref{eqn:next2} is of the form of a reweighted 
$\ell_1$-optimization problem. The details of the optimization procedure are given
in \cref{alg:penalty}.
(We remark that, in \cref{alg:penalty}, other metrics measuring the difference
between the successive weight vectors can be used in step 3.)

\begin{algorithm}[!ht]
\caption{Reweighted $\ell_1$ Algorithm.} 
\begin{algorithmic}[1]
\REQUIRE Initial guess $\vec{w}^{(0)} \in \mathbb{R}^{n_s}$, stopping tolerance tol, penalty parameters $\gamma,\epsilon \geq  0$. 
\ENSURE  Optimal design $\vec{w}^*\in \mathbb{R}^{n_s}$.
\STATE Initialize $m=1$. 
\STATE Compute $$\vec{w}^{(1)}=\argmin_{\vec{w}\in \mathcal{W}}\Phi(\vec{w})+\gamma\|\vec{w}\|_1.$$ 
\WHILE{$m<m_{max}$ and $\|\vec{w}^{(m)}-\vec{w}^{(m-1)}\|_2>\text{tol}$}
\STATE Update $m=m+1$
   \STATE Compute $$\vec{w}^{(m)}=\argmin_{\vec{w}\in\mathcal{W}}\Phi(\vec{w})+\gamma\sum_{i=1}^{n_s}r_i\cdot w_i,$$ 
   \STATE Update 
       $$r_i=\frac{\epsilon}{(|w_i^{(m-1)}|+\epsilon)^2}, \quad i=1,\ldots,n_s.$$
\ENDWHILE
\STATE Return $\vec{w}^{(m)}=\vec{w}^*.$
\end{algorithmic}
\label{alg:penalty}
\end{algorithm}

We conclude this section with a few remarks regarding this algorithm. In our
application, $\Phi(\vec{w})$ is convex; therefore, each subproblem to update
the design weights is also convex. 
To
initialize the reweighted $\ell_1$ algorithm, 
we start with the weights $r_i = 1$, 
$i = 1, \ldots, \Ns$.
This ensures that, in the first step, we are
computing the solution of the $\ell_1$-penalized optimization problem. 
The subsequent reweighted $\ell_1$ iterations further promote binary designs.  
To solve the subproblems
in each reweighted $\ell_1$ iteration we use an interior point algorithm;
however, any solver for appropriate convex optimization may be used. In
\cref{sec:numerics}, we will provide a discussion of our choice of $\epsilon$
for our application. It is also worth mentioning that besides the penalty
function $P_\epsilon$ used above, another possible choice is
$\sum_{i=1}^{n_s}\arctan(|w_i|/\epsilon)$ which yields the weights
$\frac{1}{|w_i|^2 + \epsilon^2}$.

%
%
\section{Modifed A-optimal criterion}\label{sec:mod_a}

Motivated by reducing the computational cost of computing A-optimal designs, in
this section, we introduce a \emph{modified A-optimality} criterion.  As mentioned in~\cite{ChalonerVerdinelli95}, we can 
consider a weighted A-optimal criterion $\trace{\boldsymbol\Gamma \post (\vec{w})}$, where $\boldsymbol\Gamma$ is a positive semidefinite weighting matrix. Similar to \cref{estobj}, we work with  $\trace{ \boldsymbol{\Gamma}(\post  - \prior{})}$, since the term $\trace{ \boldsymbol{\Gamma}(\prior{})}$ is independent of the weights $\vec{w}$. By choosing $\boldsymbol\Gamma = \prior{-1}$, we obtain the modified A-optimal criterion 
\begin{equation}
\moda{(\vec{w})}\equiv  \trace{\left(\eye+\HH(\vec{w})\right)^{-1} - \eye}.
\label{moda}
\end{equation}
Note that the expression for $\moda{(\vec{w})}$ remains meaningful in the
infinite-dimensional limit. This can be seen by noting that 
\[
\moda{(\vec{w})} = 
\trace{\big(\eye+\HH(\vec{w})\big)^{-1} - \eye} 
= -\trace{\HH(\vec{w})\big(\eye + \HH(\vec{w})\big)^{-1}},
\]
and using the fact that in the infinite-dimensional limit, 
for every $\vec{w} \in [0, 1]^\Ns$,
$\HH(\vec{w})$ is trace class and 
$\big(\eye + \HH(\vec{w})\big)^{-1}$ is a bounded linear operator. 

We show in our numerical results that
the modified A-optimality criterion can be useful in practice, if a 
cheaper alternative to the A-optimal criterion is desired, and minimizing
$\moda{}$ can provide designs that lead to small posterior uncertainty.

\subsection{Derivation of estimators}
Here we seek to improve the efficiency of the modified A-optimal criterion by
computing a randomized estimator for the modified A-optimal criterion and its
gradient.  As in the previous derivation of our estimators, we replace
$\HH(\vec{w})$ by its low rank approximation to obtain the randomized estimator
for the modified A-optimal criterion

\begin{equation}
\moda{(\vec{w})}\approx\trace{(\eye+\hatHH(\vec{w}))^{-1}-\eye} \equiv \modarand{(\vec{w};\ell)}\label{estmoda}.
\end{equation}
Similarly, we use the same low-rank approximation 
${\hatHH}(\vec{w})$ in the gradient of the modified A-optimal criterion to obtain the randomized estimator
\begin{equation}\widehat{\partial_j\Phi}_\text{mod}(\vec{w};\ell) = -\trace{(\eye + {\hatHH}(\vec{w}))^{-1}\FF^T\E_j^\text{noise}\FF(\eye + 
{\hatHH}(\vec{w}))^{-1}},
\label{estmodagrad}
\end{equation}
for $j=1,\dots,n_s$.

\subsection{Computational procedure and cost}
Using similar techniques as those described in \cref{sec:aopt_estimators}, we
can write the estimator for the modified A-optimal criterion in terms of the
eigenvalues of $\T$:

\begin{equation}
\modarand{(\vec{w};\ell)}=-\trace{\D_\T}.
\label{modaexpand}
\end{equation}
Moreover,
\begin{equation}\label{eq:modgradient}
\begin{aligned}
\widehat{\partial_j\Phi}_\text{mod}(\vec{w}) 
&= -\trace{\FF^\top\E_j^\text{noise}\FF} + 2\trace{\FF\V\D_\T\V^\top\FF^\top\E_j^\text{noise}} -\\
&\quad\trace{\FF\V\D_\T^2\V^\top\FF^\top\E_j^\text{noise}}.
\end{aligned}
\end{equation}
with $\D_\T=\eig_\T(\eye+\eig_\T)^{-1}$ and $\V$ defined as in \cref{sec:aopt_estimators}.

The procedure for computing the estimators for the modified A-optimal criterion
follows the steps in \cref{alg:randobjgrad} closely.  Instead of presenting an
additional algorithm, we provide an overview of the computation of the
estimators for the modified A-optimal criterion along with the associated computational cost 
in terms of the number of PDE solves.

To evaluate the estimators for the modified A-optimal criterion, the only precomputation 
we perform is to obtain 
$$s_j=-\trace{\FF^\top\E_j^\text{noise}\FF}, \quad j = 1, \ldots, \Ns.$$
This term appears in the estimator for the gradient and accumulates a total cost of $n_sn_t$ PDE solves.  The remaining terms in the estimators depend on a design $\vec{w}$ and, in particular, the eigenvalues and eigenvectors of ${\hatHH}(\vec{w})$.  As with the estimators for the A-optimal criterion, the eigenvalues and eigenvectors of 
$\hatHH(\vec{w})$ are obtained by \cref{alg:randsubspace}.  Recall that the cost associated with 
\cref{alg:randsubspace}, with $q = 1$, is $4\ell$ PDE solves.  Once the eigenvalues and eigenvectors are computed, \cref{estmoda} can be evaluated without any additional PDE solves.  The remaining computational effort occurs in the evaluation of the gradient.  Because of our modification to the A-optimal criterion, the expression \cref{eq:modgradient} is efficiently evaluated by computing $\FF\V$.  Therefore, the total cost of evaluating the estimators for the modified A-optimal criterion and its gradient is $5\ell$ PDE solves.  From this computational cost analysis, we see the modified A-optimal estimators require $\ell$ less PDE solves than the A-optimal estimators.

\subsection{Error analysis}
We now quantify the absolute error of our estimators with the following theorem. 
\begin{theorem}
Let $\modarand{(\vec{w};\ell)}$ and $\widehat{\nabla\Phi}_\text{mod}(\vec{w};\ell)$ be the randomized estimators approximating the modified A-optimal objective function $\moda{(\vec{w})}$ and its gradient $\nabla\moda{(\vec{w})}$, respectively. Using the notation and assumptions of \cref{objerror}, for fixed $\vec{w}\in[0,1]^{n_s}$
$$\expectation{|\moda{(\vec{w})}-\modarand{(\vec{w};\ell)}|}\leq \trace{f(\eig_2)}  +  \trace{f(\gamma_k^{2q-1}C\eig_2)},$$
and for $j=1,\dots,n_s$,
$$\expectation{|\partial_j \moda{(\vec{w})}-\widehat{\partial_j \Phi}_\text{mod}(\vec{w};\ell)|}\leq 2\|\P_j \|_2\left(\trace{f(\eig_2)} + \trace{f(\gamma_k^{2q-1}C\eig_2)}\right),$$
 where $C$ is defined in \cref{constant}.
\label{modobjerror}
\end{theorem}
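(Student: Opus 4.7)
The plan is to mirror the proof of \cref{objerror} while accounting for the fact that the modified A-optimal criterion suppresses the $\MMprior$ factor appearing in the A-optimal criterion. With $f(x)=x/(1+x)$ as in \cref{objerror}, the identity $(\eye+\A)^{-1}-\eye = -\A(\eye+\A)^{-1}$ immediately recasts $\moda(\vec{w}) = -\trace{f(\HH(\vec{w}))}$ and $\modarand(\vec{w};\ell) = -\trace{f(\hatHH(\vec{w}))}$, so the objective error reduces to controlling $|\trace{f(\HH) - f(\hatHH)}|$. This is precisely the quantity analyzed by the randomized subspace iteration machinery used in the proof of \cref{objerror}; the only modification is that $\MMprior$ is no longer inserted inside the trace.

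For the objective bound, I would split $f(\HH) - f(\hatHH)$ into two contributions exactly as done for the A-optimal estimator: a deterministic low-rank truncation term that yields $\trace{f(\eig_2)}$, and a randomized subspace approximation term that yields $\trace{f(\gamma_k^{2q-1} C \eig_2)}$ in expectation, with $C$ given by \cref{constant}. Because no $\MMprior$ now multiplies the trace, the prefactor $\|\MMprior\|_2$ appearing in \cref{equ:aopt_obj_bound} is replaced by $1$, and the first claimed bound follows directly.

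For the gradient bound, I would start from
\begin{equation*}
\partial_j \moda(\vec{w}) = -\trace{(\eye+\HH(\vec{w}))^{-1} \P_j (\eye+\HH(\vec{w}))^{-1}},
\end{equation*}
which is the analogue of \cref{equ:gradient} with $\MMprior$ replaced by the identity, and the corresponding expression with $\hatHH$ in place of $\HH$ for $\widehat{\partial_j \Phi}_\text{mod}(\vec{w};\ell)$. I would write the difference as a telescoping sum of two terms, each of the form $\trace{[(\eye+\HH)^{-1} - (\eye+\hatHH)^{-1}]\,\P_j\,(\eye+\cdot)^{-1}}$, and apply $|\trace{\A\B}|\leq \|\B\|_2 \trace{|\A|}$ together with $\|(\eye+\HH)^{-1}\|_2,\|(\eye+\hatHH)^{-1}\|_2 \leq 1$. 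This extracts the factor $2\|\P_j\|_2$ and reduces the remaining task to bounding the trace of $|(\eye+\HH)^{-1} - (\eye+\hatHH)^{-1}|$. After a short manipulation this collapses to exactly the expression controlled in the objective analysis, yielding the second claimed bound.

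The main obstacle, as in \cref{objerror}, is the probabilistic step: establishing that after taking expectations with respect to the Gaussian random matrix $\randmatrix$, the randomized subspace error is governed by $\trace{f(\gamma_k^{2q-1} C \eig_2)}$ with $C$ as in \cref{constant}. This step hinges on Gaussian concentration bounds combined with the monotonicity of $f$ on $[0,\infty)$, and is inherited directly from the analysis underlying \cref{objerror}; once it is invoked, the rest of the argument is a clean bookkeeping exercise parallel to the A-optimal case, with $\|\MMprior\|_2$ set to $1$ throughout.
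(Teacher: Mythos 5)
Your proposal is correct and follows essentially the same route as the paper: the paper's proof of \cref{modobjerror} is literally to repeat the proof of \cref{objerror} with $\MMprior$ replaced by the identity, which is exactly what you do — the objective error reduces to \cref{thm:main} without the $\|\MMprior\|_2$ prefactor, and the gradient error uses the same two-term decomposition in $\D = (\eye+\hatHH(\vec{w}))^{-1}-(\eye+\HH(\vec{w}))^{-1}$ with the bound $\|(\eye+\HH(\vec{w}))^{-1}\|_2+\|(\eye+\hatHH(\vec{w}))^{-1}\|_2\leq 2$ supplying the factor of $2\|\P_j\|_2$. The only cosmetic difference is that the paper's probabilistic step rests on the moment bound $\expectation{\|\widehat\randmatrix_2\randmatrix_1^\dagger\|_2^2}\leq C$ combined with Jensen's inequality and the concavity of $f$, rather than Gaussian concentration, but you correctly defer to the machinery of \cref{objerror} for that step.
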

\begin{proof} See Appendix \ref{proofmodobjerror}.
\end{proof}

Notice the bounds presented in \cref{objerror} and \ref{modobjerror} differ by
a factor of $\|\Z\|_2$.  Since the modified A-optimal criterion removes one
application of $\prior{}$ from the computation of the A-optimal criterion, the
bounds related to the modified A-optimal criterion no longer have
the factor of $\|\Z\|_2$.

%
%
\section{Numerical results}\label{sec:numerics}

In this section, we present numerical 
results that test various aspects of the 
proposed methods.
We begin by a brief description of the inverse advection-diffusion problem used
to illustrate the proposed OED methods, in \cref{sec:model_problem}.  
The setup of our model problem is adapted from that
in~\cite{a14infinite}, where further details about the forward and
inverse problem can be found. In \cref{sec:model_problem}, we also describle
the numerical methods used for solving the forward problem, as well as the optimization 
solver for the OED problem. 
In \cref{subsec:aoptnumerics}, we test the accuracy of our randomized
estimators and illustrate our error bounds.  Then in
\cref{subsec:rwl1numerics}, we investigate the performance of our proposed
reweighted $\ell_1$-optimization approach.  Next, we utilize the proposed
optimization framework to compute A-optimal designs in
\cref{subsec:computeOED}.  Finally, in \cref{subsec:aoptvsmod}, we compare
A-optimal sensor placements with those computed by minimizing the modified
A-optimal criterion.

\subsection{Model problem and solvers}\label{sec:model_problem} We consider a
two-dimensional time-dependent advection-diffusion equation \[\begin{aligned}
u_t-\kappa\Delta u+\vec{v}\cdot\nabla u = 0 & \hspace{2cm}\text{in
}\domain\times(0,T),\\ u(\cdot,0)=m  & \hspace{2cm}\text{in }\domain,\\
\kappa\nabla u\cdot\vec{n} = 0 & \hspace{2cm}\text{on
}\partial\domain\times(0,T), \end{aligned} \] which models the transport of
contaminants (e.g., in the atmosphere or the subsurface).  Here $\kappa$ is the
diffusion coefficient and is taken to be $\kappa = 0.01$.  The velocity field
$\vec{v}$ is computed by solving a steady-state Navier-Stokes equations, as
in~\cite{a14infinite}.  The domain $\domain$, depicted in \cref{domain}, is the
unit square in $\mathbb{R}^2$ with the gray rectangles, modeling
obstacles/buildings, removed.  The boundary $\partial\domain$ is the union of
the outer boundary and the boundaries of the obstacles.   The PDE is
discretized using linear triangular continuous Galerkin finite elements in
space and implicit Euler in time. We let the final simulation time be $T=5$. 

The inverse problem involves reconstructing the initial state $m$
from space-time point measurements of $u(\vec{x},t)$.
We consider $\Ns = 109$ sensor candidate locations distributed throughout the domain which are indicated
by hollow squares in \cref{domain}.  Measurement data is collected from a subset of these 
locations at three observation times---$t=1,2,$ and $3.5$. 
To simulate noisy observations,
2\% noise is added to the simulated data.    

Recall, in~\cref{sec:background}, we define our prior covariance operator to be a Laplacian-like operator.  Following~\cite{a14infinite}, an application of the square root of the prior on $s\in L^2(\mathcal{D})$ is $v=\mathcal{A}^{-1}s$, which satisfies the following weak form
\begin{equation}\label{eq:prior}
\int_\mathcal{D}\theta\nabla v\cdot\nabla p + \alpha vp\,d\vec{x} = \int_\mathcal{D} sp\,d\vec{x}\qquad\text{for every }p\in H^1(\mathcal{D}).
\end{equation}
Here $\theta$ and $\alpha$ control the variance and correlation length and are chosen to be $\alpha = 0.1$ and $\theta = 0.002$, respectively.

The optimization solver used for the OED problem is a quasi-Newton interior 
point method. Specifically, to solve each subproblem of 
\cref{alg:penalty}, we use \textsc{MATLAB}'s
interior point solver provided by the \verb|fmincon| function; BFGS
approximation to the Hessian is used for line search. We use a vector of all ones, $\vec{1}\in\reals^{n_s}$, as the initial guess for the optimization solver.

\begin{figure}[!ht]
\begin{center}
  \includegraphics[width = 0.5\textwidth]{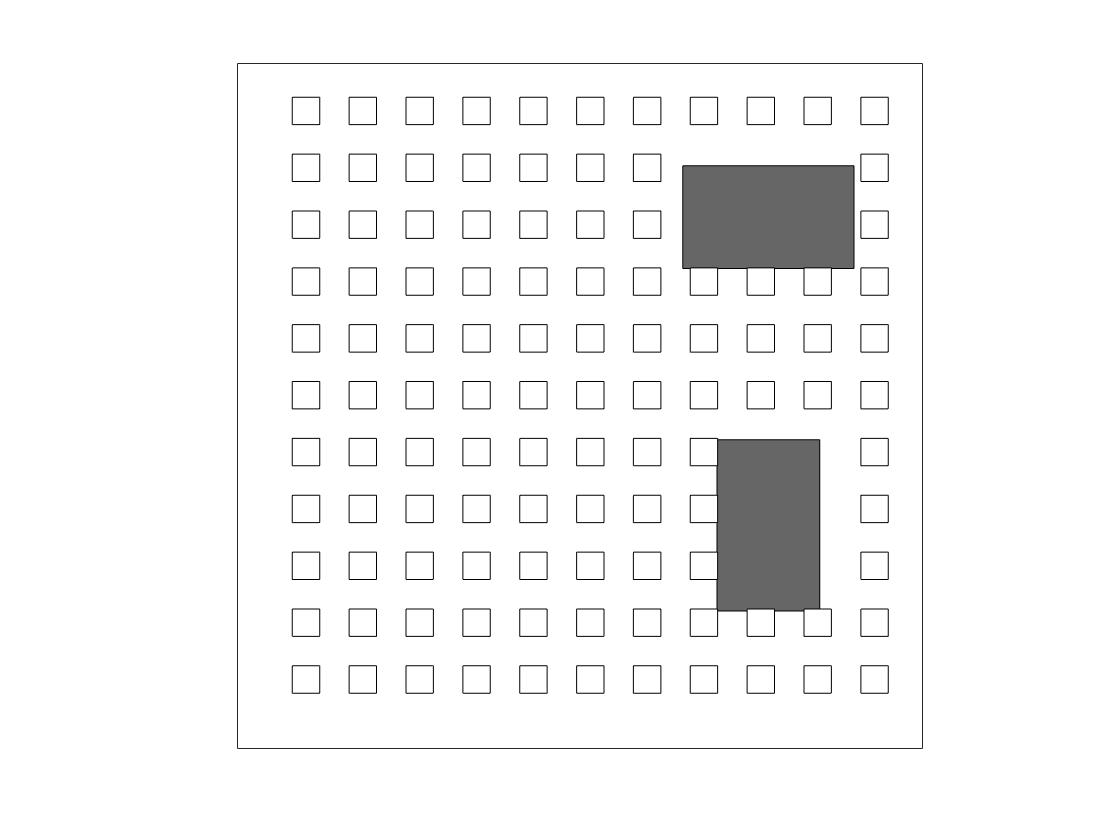}
\end{center}
\caption{Domain with 109 candidate sensor locations.}
\label{domain}
\end{figure}
For the numerical experiments presented in this section, the random matrix
$\boldsymbol{\Omega}$ in Algorithm 1 is fixed during the optimization process.
However, because of the randomness, the accuracy of the estimators, and the
optimal design thus obtained, may vary with different
realizations of $\boldsymbol{\Omega}$. By conducting additional numerical
experiments (not reported here) the stochastic nature of the estimators
resulted in minor variability (at most one or two sensor locations) in the
optimal experimental designs for a modest value of $\ell$. On the other hand,
if $\ell$ is sufficiently large, we observed that the same design was obtained
with different realizations of $\boldsymbol\Omega$

\subsection{Accuracy of estimators}
\label{subsec:aoptnumerics}
Here we examine the  accuracy of the randomized estimators  with respect to 
$\ell$, the
number of columns in the sampling matrix $\randmatrix$ in the randomized
subspace iteration algorithm.  
Specifically, we compute 
\[
\begin{aligned}
e_1(\ell)= \frac{  |\aopt(\vec{w}) - \aoptrand(\vec{w}; \ell)|}{|\aopt(\vec{w})|}, \quad & 
e_2(\ell)= \frac{ \|\nabla\aopt(\vec{w}) - \widehat{\nabla\Phi}_\text{aopt}(\vec{w}; \ell)\|_2}
                {\|\nabla\aopt(\vec{w})\|_2},\\
e_3(\ell)= \frac{ | \moda(\vec{w}) - \modarand(\vec{w}; \ell)|}{|\moda(\vec{w})|}, \quad & 
e_4(\ell)= \frac{ \|\nabla\moda(\vec{w}) - \widehat{\nabla\Phi}_\text{mod}(\vec{w}; \ell) \|_2}
                {\|\nabla\moda(\vec{w})\|_2},
\end{aligned}
\]
with $\vec{w}$ taken to be a vector of all ones; that is, with all sensors
activated.  We let $\ell$ to vary from $17$ to $327$, because the rank of
$\HH(\vec{w})$ is no larger than the number of observations taken,
$n_sn_t=327$.  
\cref{relerr6} illustrates the relative error in the estimators for the
A-optimal criterion and its gradient (left) and the modified A-optimal
criterion and its gradient (right), as $\ell$ is varied.  We  observe that
the  error decreases rapidly with increasing $\ell$.   This illustrates accuracy and
efficiency of our estimators.

Next, we consider the absolute error in the estimators for the objective
function and compare them with the theoretical bounds derived in
\cref{objerror,modobjerror}.  In \cref{bounds}, we compare the absolute error
in the estimators with bounds from \cref{objerror} and \cref{modobjerror}. As
before, we take $\vec{w}=[1,1,\dots,1]^\top\in\reals^{n_s}$.  We observe that
our error bound captures the general trend in the error. Moreover, the error
bound for the modified A-optimality is better since it does not have the
additional factor of $\|\Z\|_2$.

\begin{figure}[!ht]
\begin{center}
\begin{subfigure}{.45\textwidth}
\begin{center}
  \includegraphics[width=2.3in]{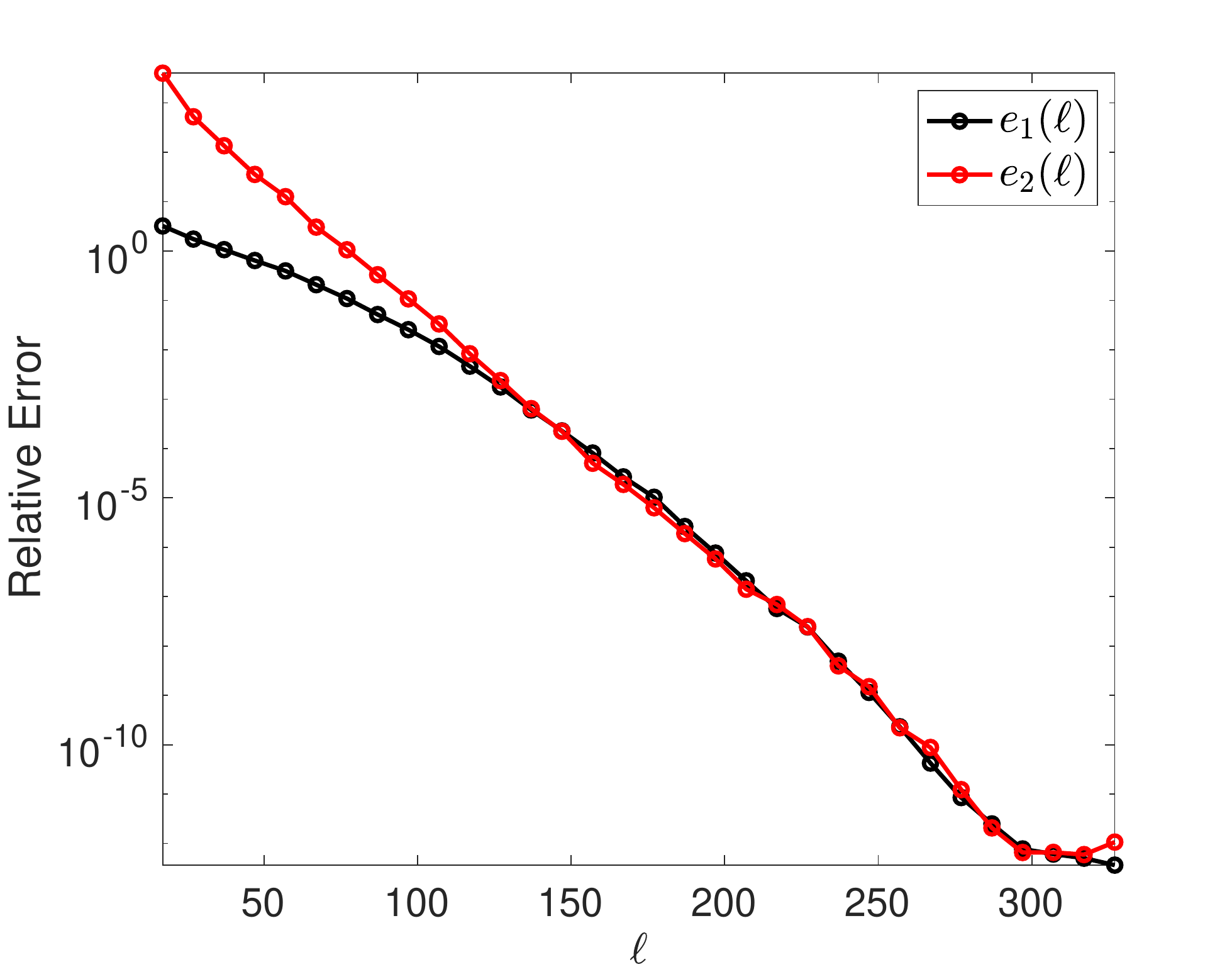}
\end{center}
\end{subfigure}
\begin{subfigure}{.45\textwidth}
\begin{center}
  \includegraphics[width=2.3in]{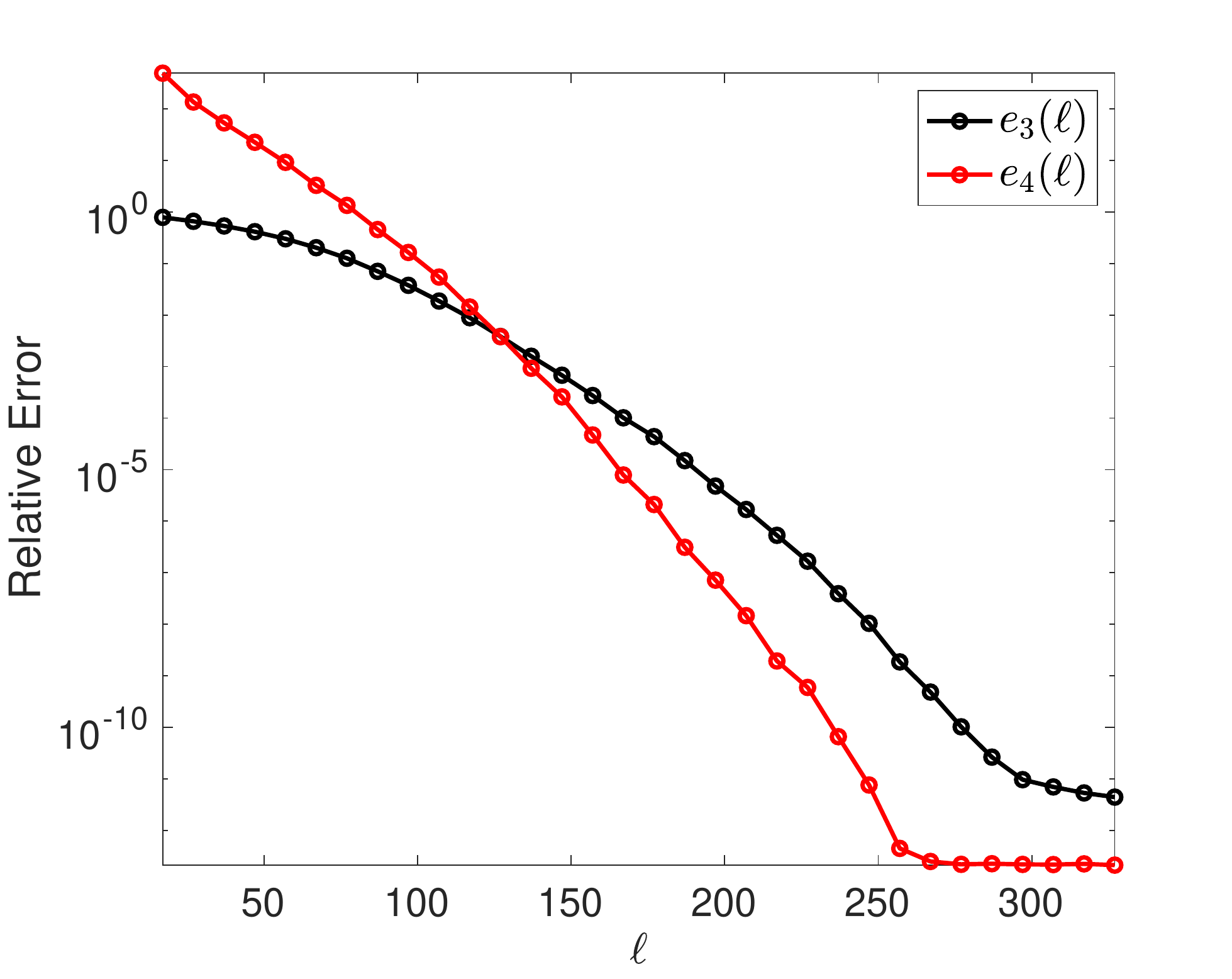}
\end{center}
\end{subfigure}
\caption{Relative error of the randomized estimators for the A-optimal criterion (left), and those corresponding to the modified A-optimal criterion (right) for varying $\ell$.}
\label{relerr6}
\end{center}
\end{figure}

\begin{figure}[!ht]
\begin{center}
\begin{subfigure}{.45\textwidth}
\begin{center}
  \includegraphics[width=2.3in]{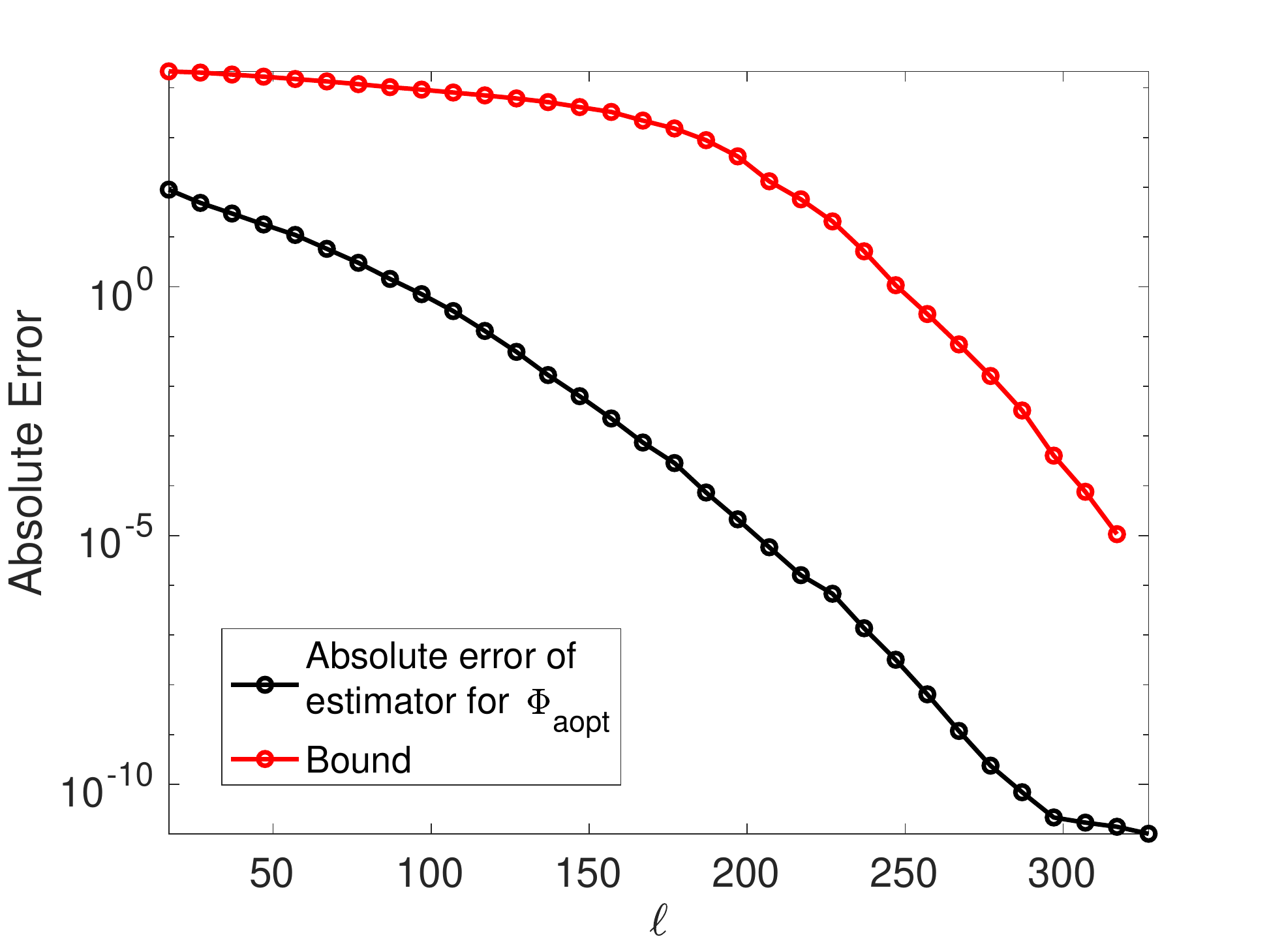}
\end{center}
\end{subfigure}
\begin{subfigure}{.45\textwidth}
\begin{center}
  \includegraphics[width=2.3in]{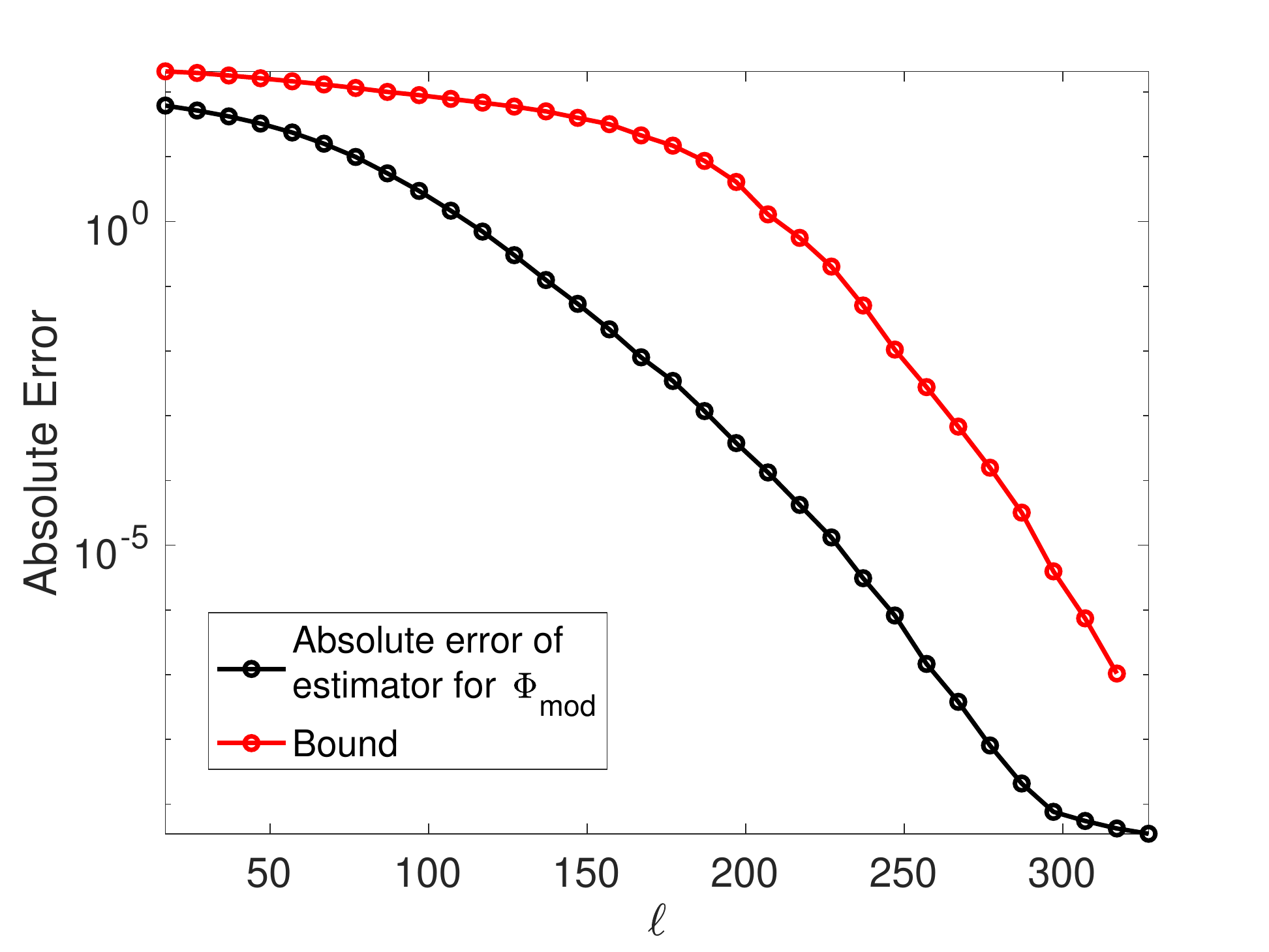}
\end{center}
\end{subfigure}
\caption{Absolute error bound for $\widehat{\Phi}_\text{aopt}(\vec{w})$ from \cref{objerror} (left), and $\widehat{\Phi}_\text{mod}(\vec{w})$ from \cref{modobjerror} (right) for varying $\ell$.}
\label{bounds}
\end{center}
\end{figure}

\subsection{Performance of the reweighted $\ell_1$ algorithm}
\label{subsec:rwl1numerics} We now consider solving \cref{opt} with
\cref{alg:randobjgrad} and \ref{alg:penalty}.   
We first consider the choice of the value of $\epsilon$ in \cref{eqn:next2}.
The user-defined parameter $\epsilon$ controls the steepness of the penalty
function at the origin; see \cref{fig:norm}. However, we observed that if the
penalty function is too steep, the optimization solvers took more iterations
without substantially altering the optimal designs. We found $\epsilon = 1/2^8$ to be
sufficiently small in our numerical experiments, and we keep this fixed for the
remainder of the numerical experiments. 
 
We perform two numerical experiments examining the impact of changing $\ell$
and the penalty parameter $\gamma$ (in \cref{opt}).  In the first experiment,
we fix the penalty coefficient at an experimentally determined value of
$\gamma=3$, and vary $\ell$; the results are recorded in \cref{aopttable}.
Notice when $\ell\geq127$, the objective function value evaluated with the
optimal solution, the number of active sensors, and number of subproblem solves
do not change.  This suggests that the randomized estimators are sufficiently
accurate with $\ell = 127$ and this yields a substantial reduction in
computational cost.

\begin{table}[h!]
\begin{center}
\begin{tabular}{c|c|c|c|c}
$\ell$ & subproblem solves & function count & function value & $ns_\text{active}$ \\
\hline
57 & 10 & 395 & 39.0176 & 38\\
67 & 9 & 610 & 44.7219 & 30\\
77 & 9 & 790 & 44.5484 & 29\\
87 & 9 & 756 & 44.1142 & 30\\
127 & 9 & 1015 & 44.1139 & 30\\
207 & 9 & 978 & 44.1139 & 30\\
307 & 9 & 970 & 44.1139 & 30\\
\end{tabular}
\caption{Number of subproblem solves \cref{eqn:next2}, function evaluations, and active sensors for varying $\ell$ with the reweighted $\ell_1$ algorithm and $n_s=109$.}
\label{aopttable}
\end{center}
\end{table}

The second experiment involves varying $\gamma$, which indirectly
controls the number of sensors in computed designs, with $\ell$ kept fixed.
Here we fix $\ell=207$, which
corresponds to an accuracy on the order of $10^{-7}$ for the A-optimal
criterion (cf. \cref{relerr6}).  
In \cref{rwl1stats}, we report the design weights sorted in descending
order, as $\gamma$ varies. 
This shows that the reweighted $\ell_1$ algorithm indeed produces binary
designs for a range of penalty parameters.  
We also notice in
\cref{rwl1stats2} that as $\gamma$ increases  the sparsity increases (i.e., the
number of active sensors $ns_\text{active}$ decreases) and the number of
function evaluations increases.  The right panel compares the cost of
solving an $\ell_1$-penalized problem for the corresponding penalty parameter  $\gamma$. 
Since this problem is the first iterate of the
reweighted $\ell_1$ algorithm we see that an additional cost is required to
obtain binary designs and this cost increases with increasing $\gamma$ (i.e.,
more sparse designs). 
\begin{figure}[!ht]
\begin{center}
  \includegraphics[width=0.5\textwidth]{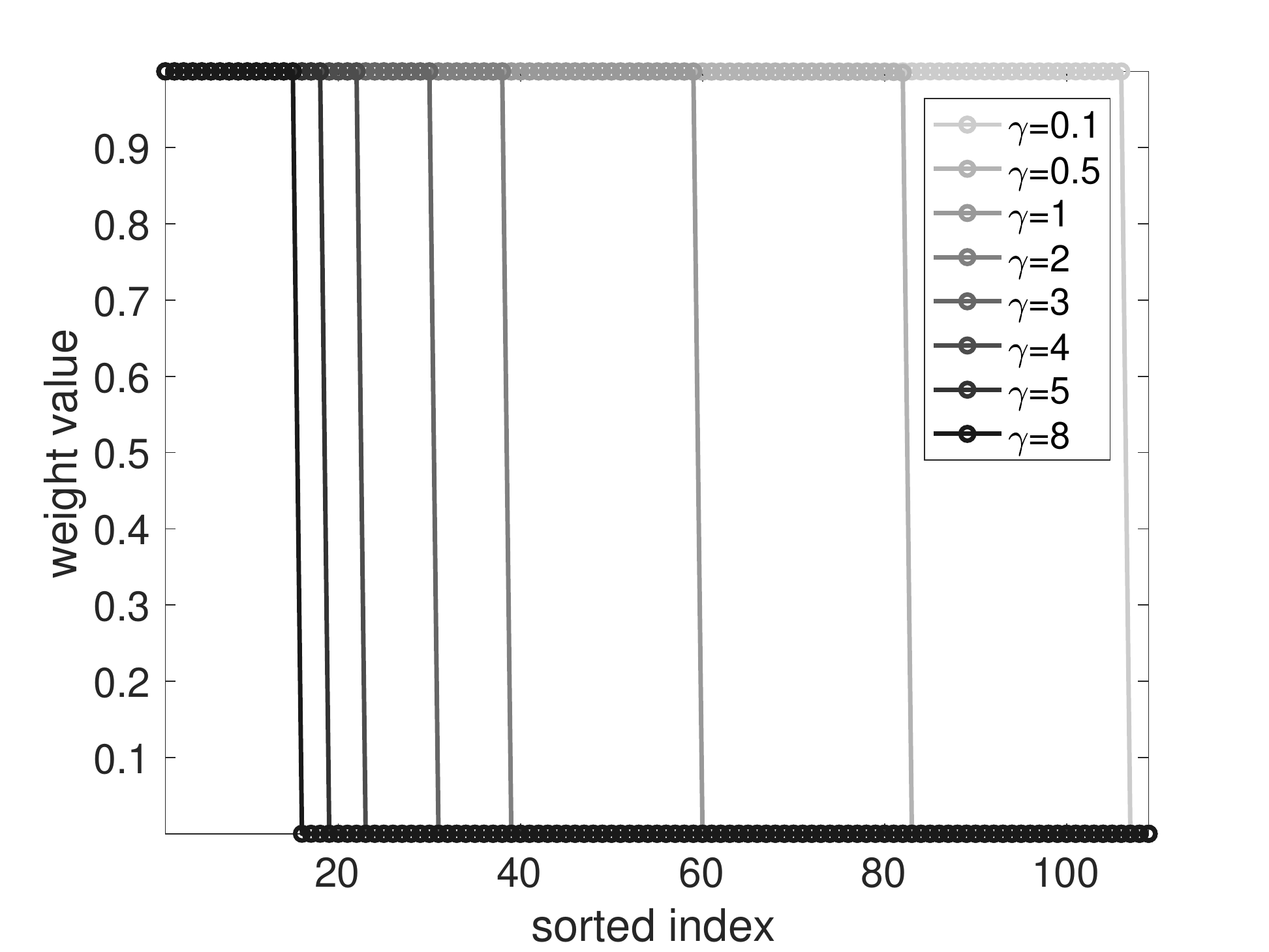}
\end{center}
\caption{Optimal designs as a result of varying $\gamma$ with the reweighted $\ell_1$ algorithm for the A-optimal criterion.  We set $\ell=207$.}
\label{rwl1stats}
\end{figure}
\begin{figure}[!ht]
\begin{center}
\begin{subfigure}{.49\textwidth}
\begin{center}
  \includegraphics[width=1.0\textwidth]{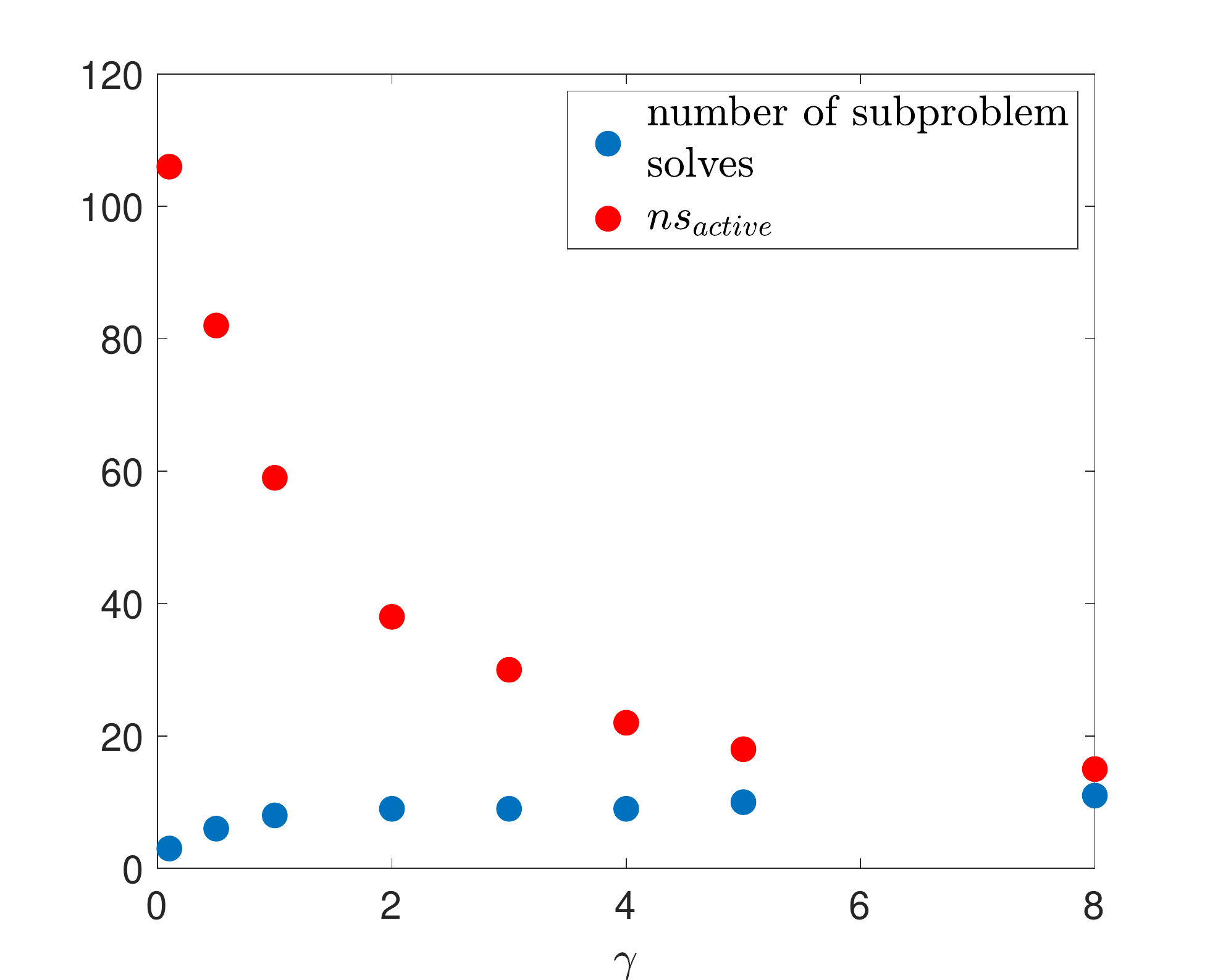}
\end{center}
\end{subfigure}
\begin{subfigure}{.5\textwidth}
\begin{center}
 \includegraphics[width=1.0\textwidth]{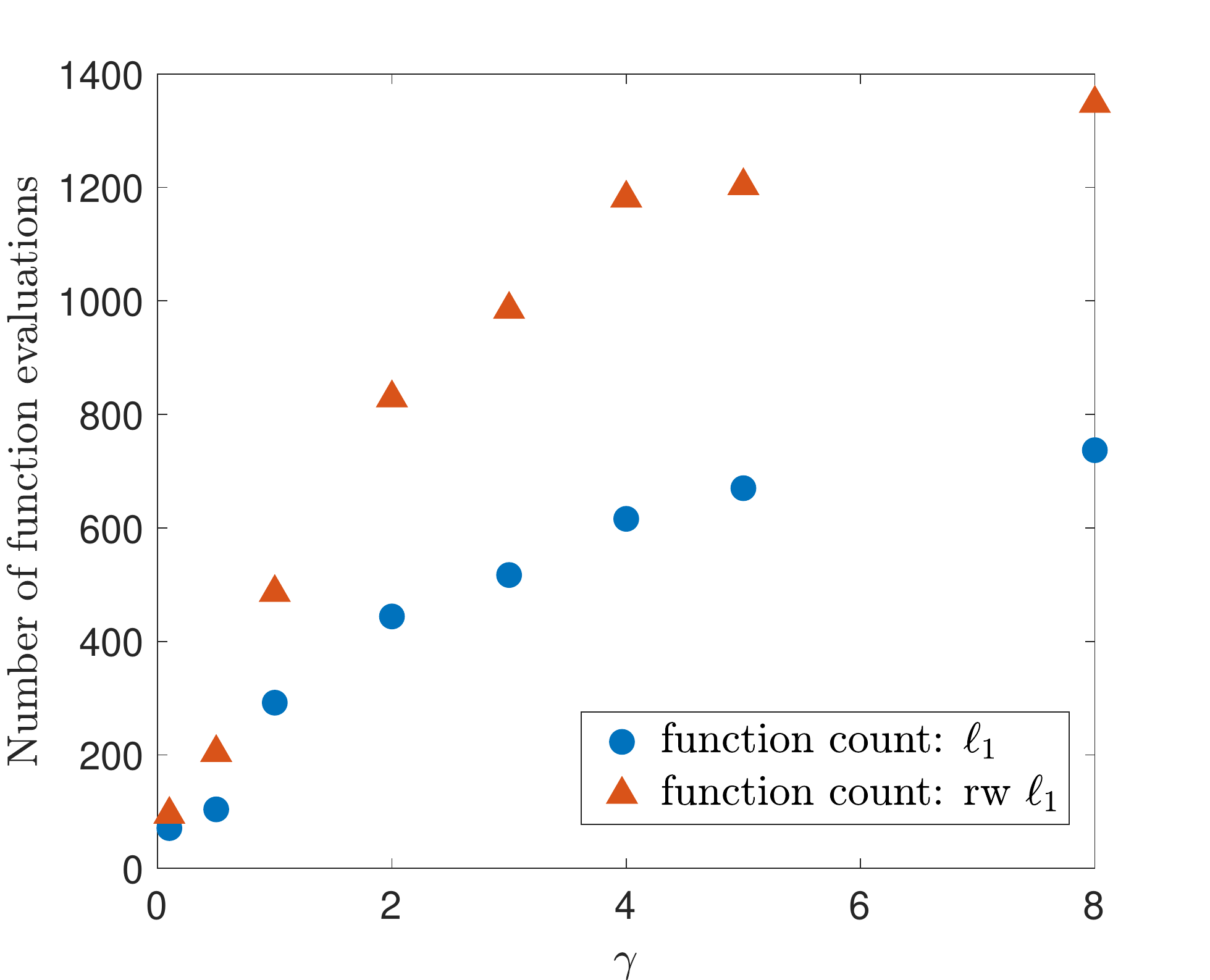}
\end{center}
\end{subfigure}
\caption{The effect of varying $\gamma$ on the reweighted $\ell_1$ algorithm for the A-optimal criterion.  We set $\ell=207$.}
\label{rwl1stats2}
\end{center}
\end{figure}

\subsection{Computing optimal designs}
\label{subsec:computeOED}
In \cref{optdesign}~(left), we report an A-optimal sensor placement obtained 
using our optimization framework, with $\ell=207$ and $\gamma = 5$;
the resulting optimal sensor locations, with $18$ active sensors, 
are superimposed on the posterior standard
deviation field. While the design is computed to yield a minimal average variance of the
posterior distribution, it is also important to consider the mean of this
distribution.  For completeness, in \cref{optdesign} we show the ``true`` initial
condition~(middle panel), used to generate synthetic data, 
and the mean of the resulting posterior distribution for the $18$
active sensor design~(right panel). 
\begin{figure}[!ht]
\begin{center}
\begin{subfigure}{.32\textwidth}
\begin{center}
  \includegraphics[width=1.0\textwidth]{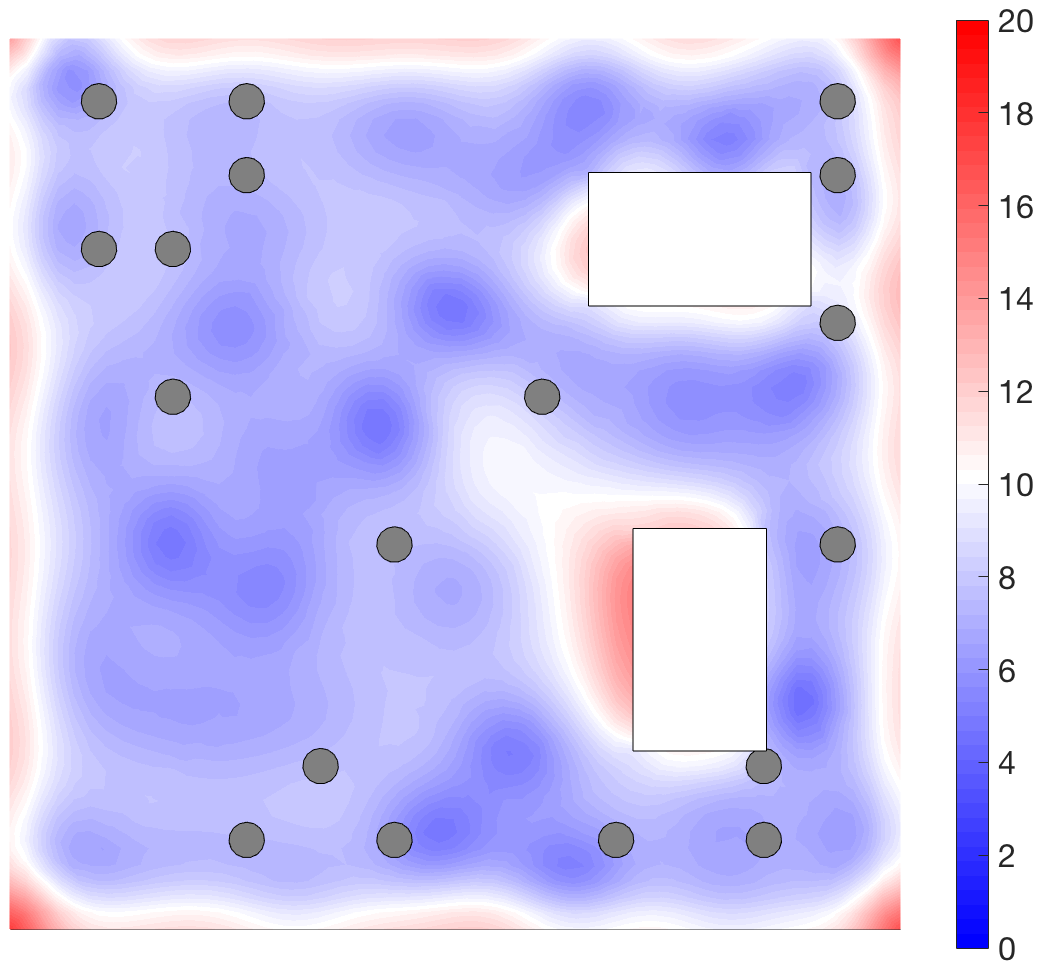}
\end{center}
\end{subfigure}
\begin{subfigure}{.30\textwidth}
\begin{center}
 \includegraphics[width=0.91\textwidth]{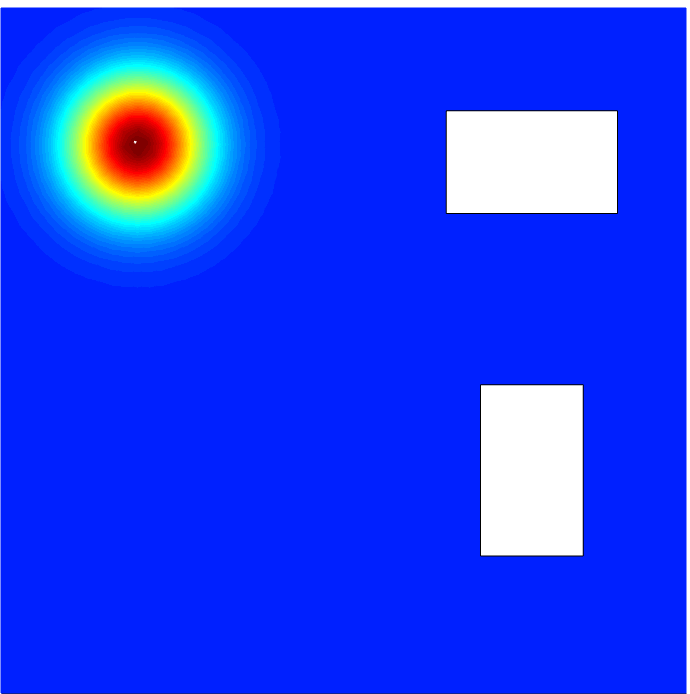}
\end{center}
\end{subfigure}
\begin{subfigure}{.32\textwidth}
\begin{center}
  \includegraphics[width=1.0\textwidth]{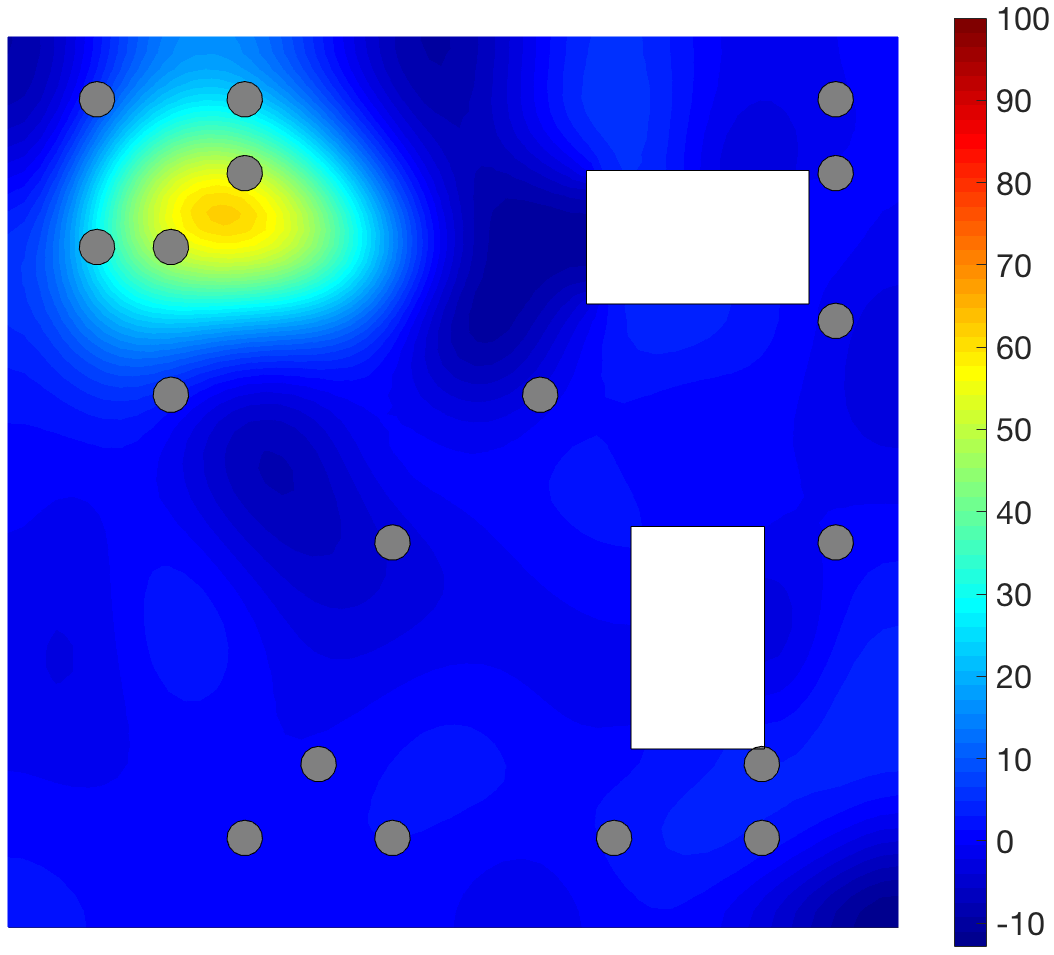}
\end{center}
\end{subfigure}
\caption{Standard deviation computed using the optimal design indicated by the gray circles (left). True initial condition (middle) and initial condition reconstruction (right). The optimal design was computed using $\ell=207$, the reweighted $\ell_1$ algorithm, and $\gamma = 5$.}
\label{optdesign}
\end{center}
\end{figure}
With the $18$ sensor design, we also illustrate the resulting
uncertainty reduction by looking at the prior and posterior standard deviation
fields; see \cref{priorvspost}.

\begin{figure}[!ht]
\begin{center}
\begin{subfigure}{.49\textwidth}
\begin{center}
  \includegraphics[width=1.0\textwidth]{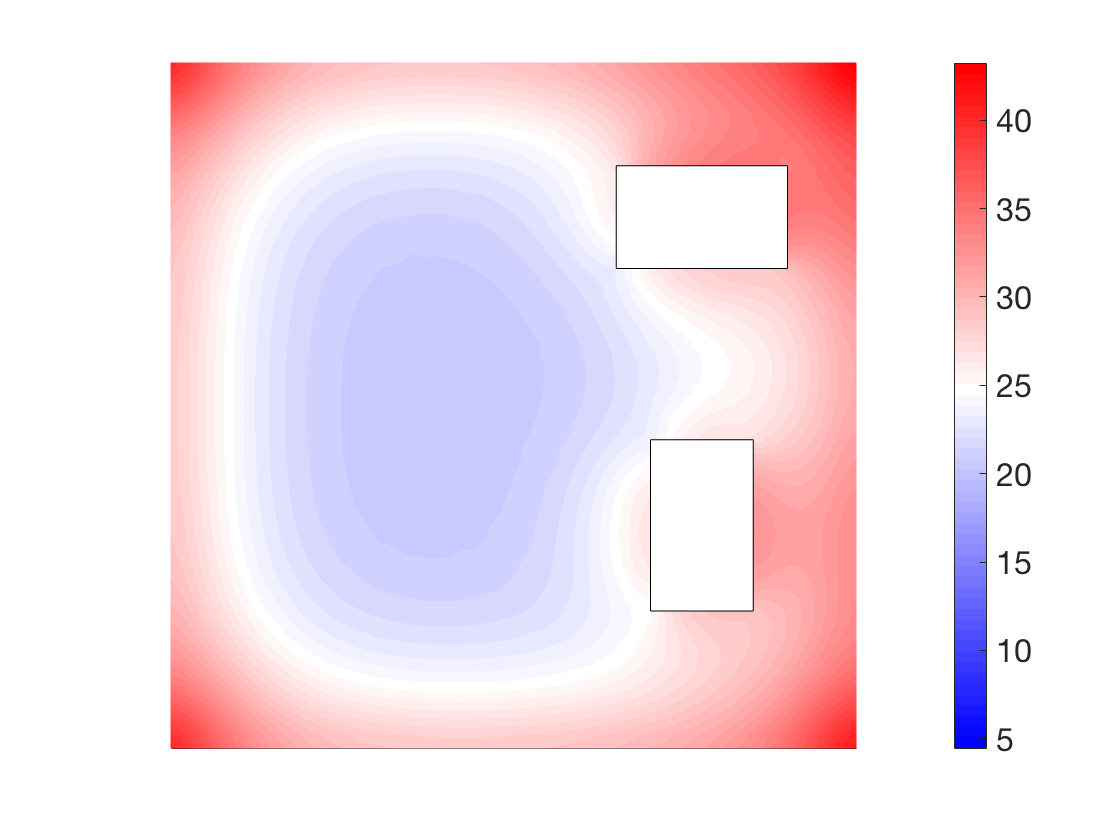}
\end{center}
\end{subfigure}
\begin{subfigure}{.5\textwidth}
\begin{center}
 \includegraphics[width=1.0\textwidth]{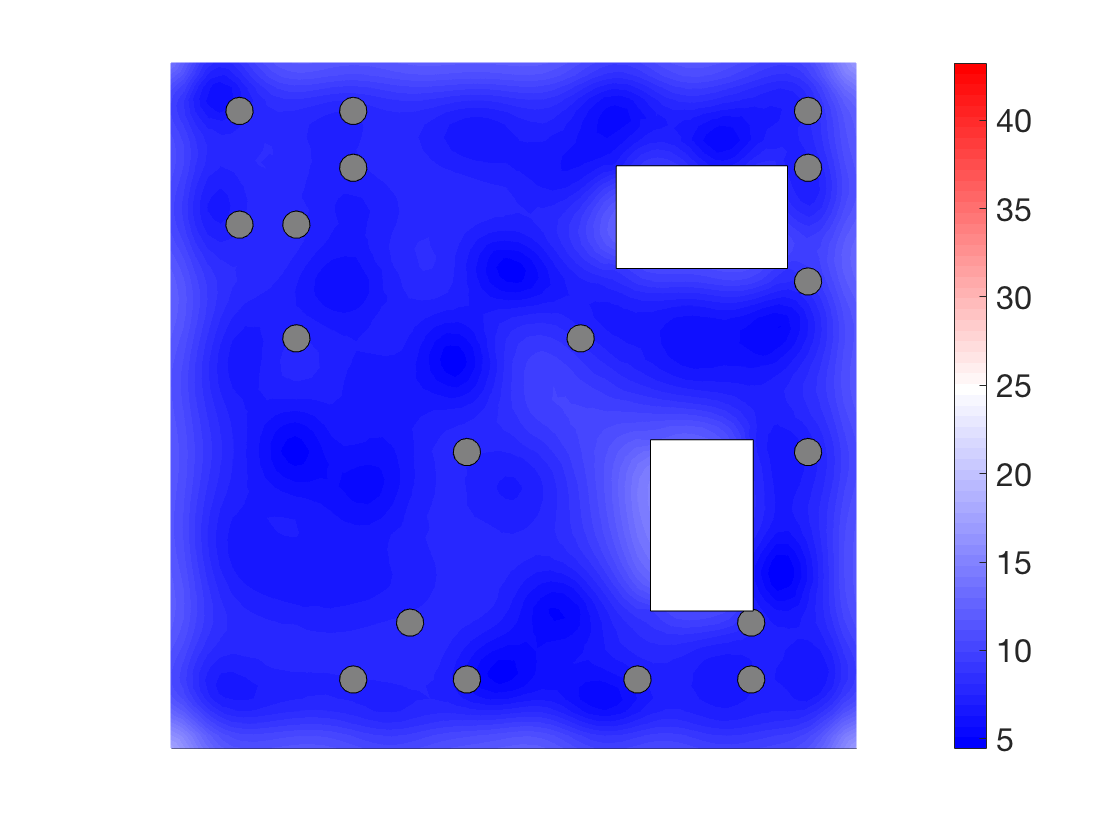}
\end{center}
\end{subfigure}
\caption{Comparison of the prior standard deviation field (left) with the posterior standard 
deviation field (right) computed using the optimal design indicated by the gray circles.}
\label{priorvspost}
\end{center}
\end{figure}

We now compare the designs obtained using the reweighted $\ell_1$ algorithm and our estimators against
designs chosen at random, illustrating the effectiveness of the proposed
A-optimal design strategy. Recall that varying $\gamma$ allows us to obtain
optimal designs with different numbers of active sensors. For each value of
$\gamma$, we use \cref{alg:randobjgrad} and \ref{alg:penalty} to compute an optimal 
design.  We then draw $15$ random designs with
the same number of active sensors as the optimal design obtained using our algorithms.  To enable a consistent comparison, we evaluate the exact A-optimal criterion
$\Phi_\text{aopt}(\vec{w})$ at the computed optimal designs and the random
ones; the results are reported in the left panel of~\cref{compare}. 
The values corresponding to the computed optimal designs 
are indicated as dots on the
black solid line.  The values obtained from the random designs are indicated by
the squares.  We note that the designs computed with the reweighted
$\ell_1$ algorithm consistently beat the random designs, as expected. This 
observation is emphasized in the right panel of~\cref{compare}.  Here, we compared
the computed optimal design (when $\gamma=3$) with 1500 randomly generated designs 
using the exact trace of the posterior covariance.  Again, the computed design results in a lower true A-optimal value than the random designs.

\begin{figure}[!ht]
\begin{center}
\begin{subfigure}{.49\textwidth}
\begin{center}
  \includegraphics[width=1.0\textwidth]{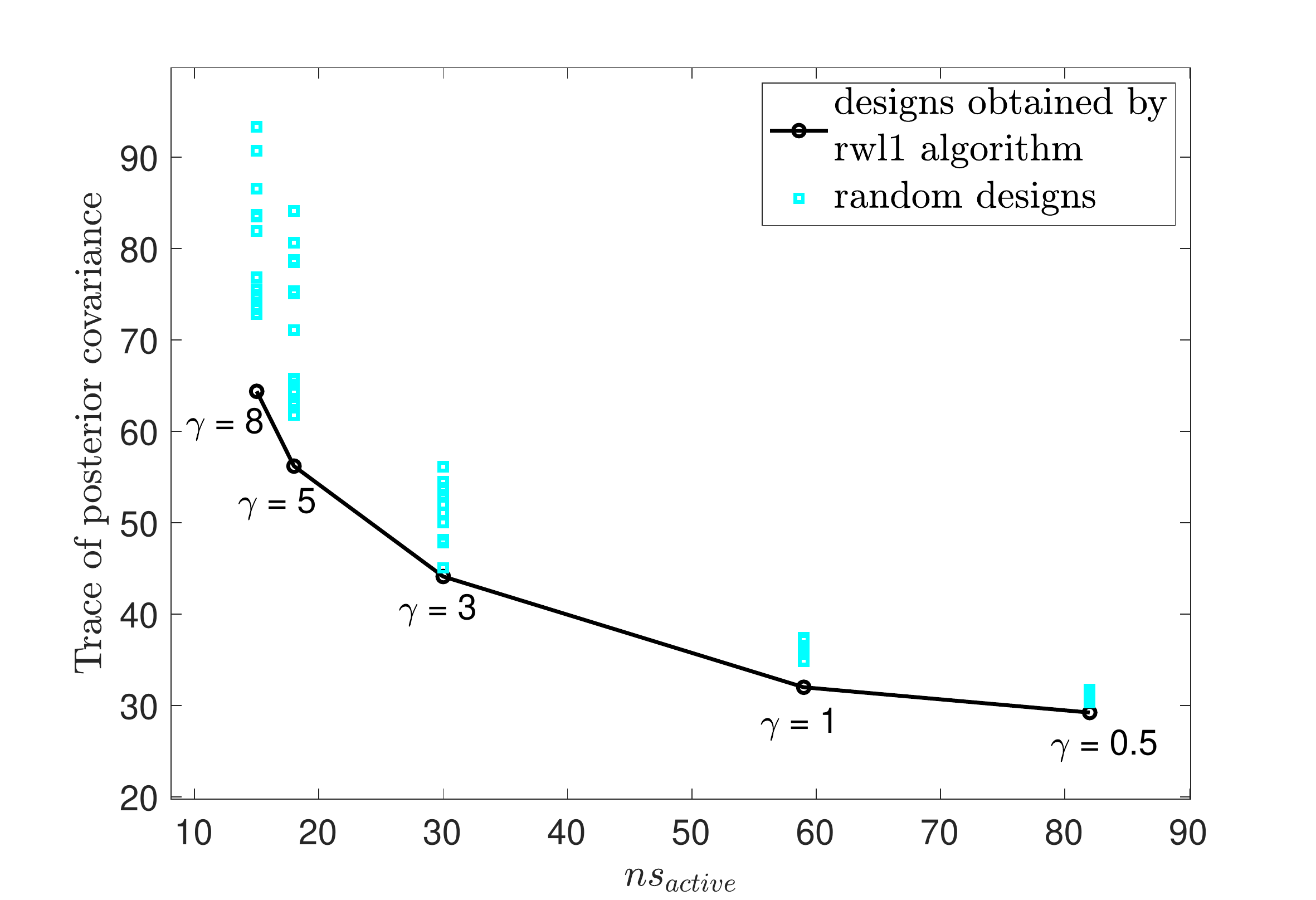}
\end{center}
\end{subfigure}
\begin{subfigure}{.5\textwidth}
\begin{center}
 \includegraphics[width=1.0\textwidth]{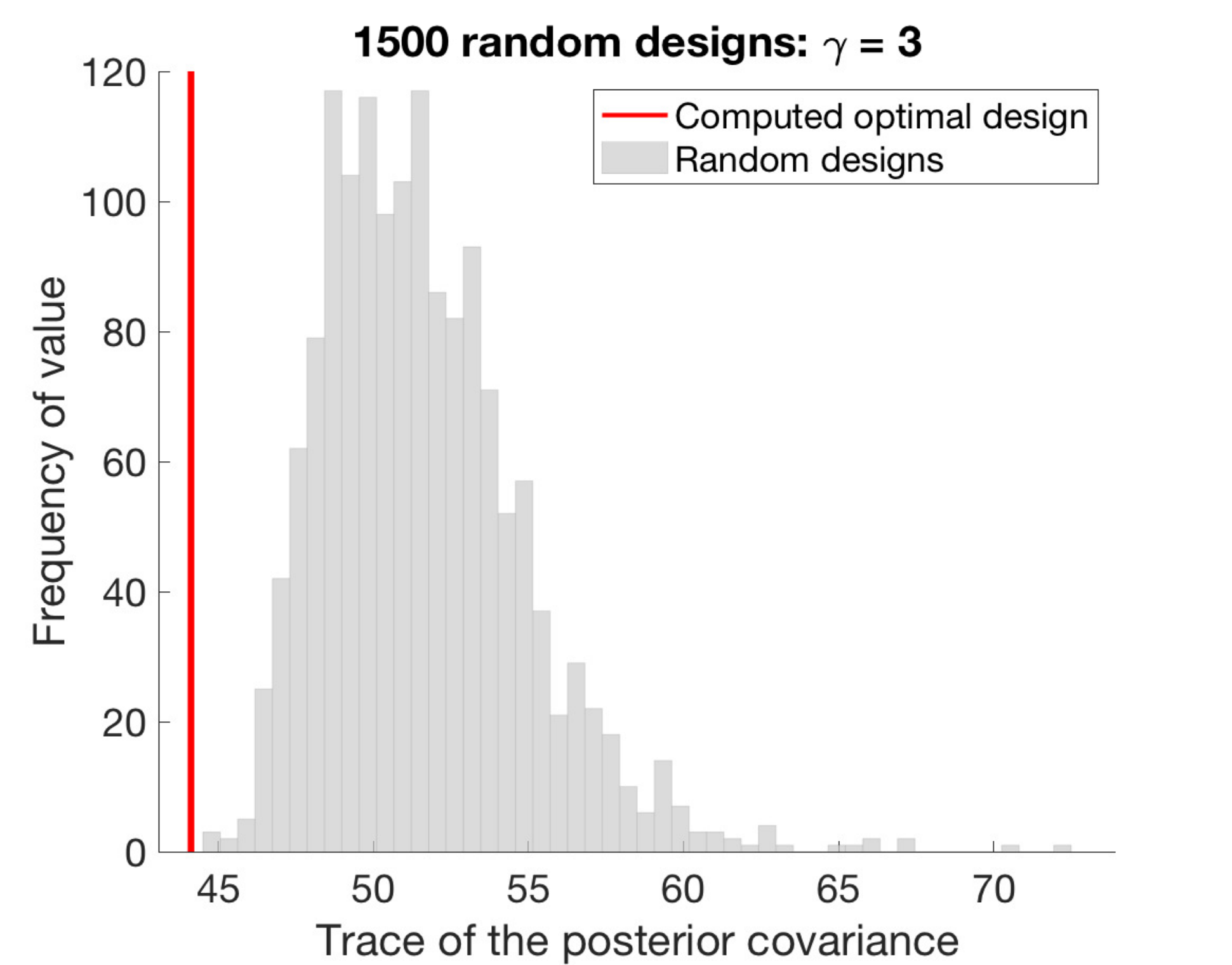}
\end{center}
\end{subfigure}
\caption{The true A-optimal criterion computed using the optimal and randomly generated designs. The optimal designs were computed using the A-optimal criterion and the reweighted $\ell_1$ algorithm for different values of $\gamma$ (left).  Comparing the computed optimal design when $\gamma=3$ to 1500 randomly generated designs (right).}
\label{compare}
\end{center}
\end{figure}

\subsection{Comparing A-optimal and modified A-optimal designs}
\label{subsec:aoptvsmod}
Here we provide a quantitative comparison of sensor placements obtained by
minimizing A-optimal and modified A-optimal criteria using our proposed
algorithms.  Specifically, for various values of $\gamma$, we solve \cref{opt}
with both the A-optimal and modified A-optimal estimators to obtain two sets of
designs.  By varying $\gamma$, the resulting designs obtained with the
A-optimal and modified A-optimal estimators have different number of active
sensors.  Using both sets of designs, we evaluate the exact A-optimal criterion
$\Phi_\text{aopt}(\vec{w})$; these are displayed in \cref{avsmod}.  Observe
that in all cases the computed A-optimal and modified A-optimal designs 
lead to similar levels of average posterior variance.
This suggests that the modified A-optimal criterion could be used as a
surrogate for the A-optimal criterion.  Using the modified A-optimal criterion
decreases the overall number of PDE solves and yields designs that result in
values of the average posterior variance close to those produced by A-optimal
designs.
\begin{figure}[!ht]
\begin{center}
  \includegraphics[width=0.5\textwidth]{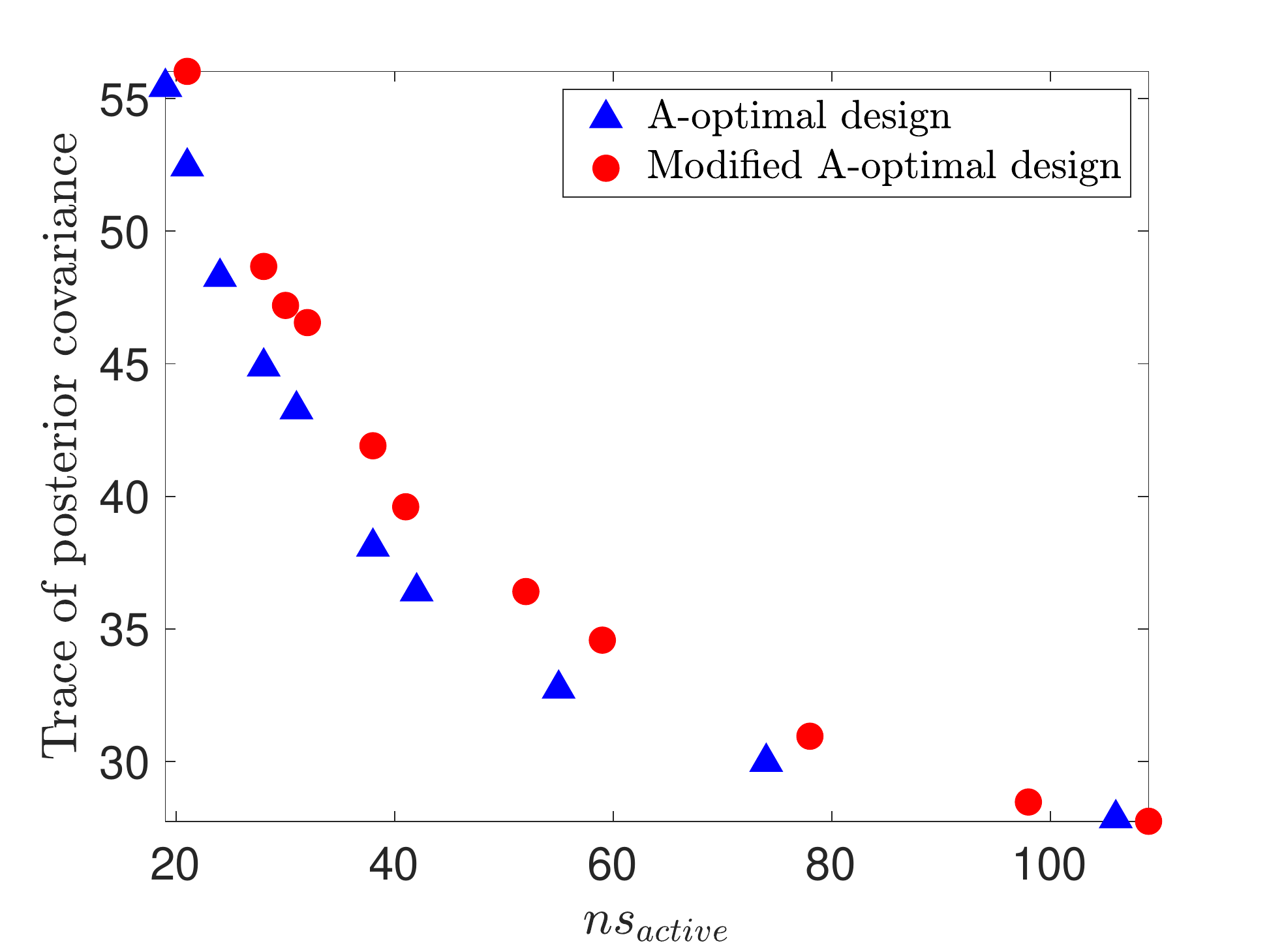}
\end{center}
\caption{Comparison of designs obtained by minimizing the (approximate) 
A-optimal and modified A-optimal criteria.
For each design, we report the exact trace of the corresponding posterior covariance operator.}
\label{avsmod}

\end{figure}

%
%
\section{Conclusion}\label{sec:conclusions}

We have established an efficient and flexible computational framework for
A-optimal design of experiments in large-scale Bayesian linear inverse
problems. The proposed randomized estimators for the OED objective and its
gradient are accurate, efficient, simple to implement and parallelize.
Specifically, the randomized estimators exploit the low-rank structure in the
inverse problem; namely, the low-rank structure of the prior-preconditioned
data misfit Hessian---a common feature of ill-posed inverse problems.  Our
reweighted $\ell_1$-minimization strategy is tailored to sensor placement
problems, where finding binary optimal design vectors is desirable. We also
presented the modified A-optimal criterion, which is more computationally
efficient to compute and can provide designs that, while sub-optimal if the
goal is to compute A-optimal designs, provide a systematic means for obtaining
sensor placements with small posterior uncertainty levels. 

Open questions that we seek to explore in our future work include adaptive
determination of the target rank $k$ within the optimization algorithm, to
further reduce computational costs, while ensuring sufficiently accurate
estimates of the OED objective and gradient. Another possible line of inquiry
is to use different low-rank approximations, such as Nystr\"om's method, and
extending the randomized estimators to approximate trace of matrix functions.
We also seek to incorporate the randomized estimators in a suitable
optimization framework for Bayesian nonlinear inverse problems, in our future
work.

\section*{Acknowledgments}
We would like to thank Eric Chi for useful discussions regarding the MM algorithm.  This material is based upon work supported in part by the National Science Foundation (NSF) award DMS-1745654.

\begin{appendix}
\section{Proofs of bounds}

\subsection{Trace of matrix function}
In the proofs below, we use the Loewner partial ordering~\cite[Chapter 7.7]{HoJ13}; we briefly recapitulate some main results that will be useful in our proof. Let $\A,\B \in \reals^{n\times n}$ be symmetric positive definite; then $\A \preceq \B$ means that $\B - \A$ is positive semidefinite. For any $\S \in \reals^{n\times m}$, it also follows that $\S^\top \A \S \preceq \S^\top \B \S$. Let $\U\eig \U^\top$ be the eigendecomposition of $\A$. Then, $f(\A) = \U f(\eig)\U^\top$ and $\trace{f(\A)} = \sum_{i=1}^n f(\lambda_i)$. If $f$ is monotonically increasing then $\trace{f(\A)} \leq \trace{f(\B)}$ since $\A \preceq \B$ implies $\lambda_i(\A) \leq \lambda_i(\B)$ for $i=1,\dots,n$.

The following bound allows us to bound the trace of a matrix function in terms of its diagonal subblocks. 
\begin{lem}\label{lem:trace} Let $$\A = \bmat{\A_{11} & \A_{12} \\ \A_{12}^\top & \A_{22}}$$ be a symmetric positive definite matrix. Let $f$ be a nonnegative concave function on $[0,\infty)$. Then 
\[ \trace{f(\A)} \leq \trace{f(\A_{11})} + \trace{f(\A_{22})}.\]
\end{lem}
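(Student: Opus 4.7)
The plan is to reduce the claim to a comparison of $\A$ against its block-diagonal ``pinching''. Define $\B = \bmat{\A_{11} & \vec{0} \\ \vec{0} & \A_{22}}$. Since any function of a block-diagonal Hermitian matrix acts block-by-block, $\trace{f(\B)} = \trace{f(\A_{11})} + \trace{f(\A_{22})}$, so the lemma reduces to showing $\trace{f(\A)} \leq \trace{f(\B)}$.

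The key structural identity I would exploit is $\B = \tfrac{1}{2}\bigl(\A + \U\A\U^\top\bigr)$, where $\U = \bmat{\eye & \vec{0} \\ \vec{0} & -\eye}$ is a self-adjoint unitary (so $\U\U^\top = \eye$). From here, the central step is to establish the eigenvalue majorization $\lambda(\B) \prec \lambda(\A)$, with eigenvalues listed in nonincreasing order. I would argue this via Ky Fan's variational principle, $\sum_{i=1}^k \lambda_i(\B) = \max_{\V^\top\V = \eye_k} \trace{\V^\top\B\V}$: substituting the pinching identity splits the objective into two terms, each of which is bounded above by $\sum_{i=1}^k \lambda_i(\A)$ because $\U\V$ is itself orthonormal whenever $\V$ is, so Ky Fan applies to both. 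Equality at $k=n$ is immediate from $\trace{\A} = \trace{\B}$, completing the majorization.

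To conclude, both $\A$ and $\B$ are positive semidefinite (the diagonal blocks of a PSD matrix are PSD), so all relevant eigenvalues lie in $[0,\infty)$ where $f$ is defined and concave. Hence the symmetric function $(x_1,\dots,x_n) \mapsto \sum_i f(x_i)$ is Schur-concave on $[0,\infty)^n$, and the majorization $\lambda(\B) \prec \lambda(\A)$ yields $\trace{f(\B)} = \sum_i f(\lambda_i(\B)) \geq \sum_i f(\lambda_i(\A)) = \trace{f(\A)}$, which is the desired bound. The main obstacle is the majorization step itself; extra care is required because the hypothesis is only scalar (not operator) concavity of $f$, which is precisely why the proof must route through eigenvalue majorization and Schur-concavity rather than attempting a direct operator Jensen inequality on the pinching identity. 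The nonnegativity of $f$ enters only to ensure that the sums of $f$-values are well-behaved on the relevant spectral range, and is compatible with our PSD setting since all eigenvalues involved are nonnegative.
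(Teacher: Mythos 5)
Your proof is correct, but it takes a genuinely different route from the paper: the paper does not prove the lemma at all, it simply cites Theorem 2.1 and Remark 2.4 of Lee's extension of Rotfel'd's theorem. Your argument is self-contained and elementary for the symmetric positive (semi)definite case at hand: the pinching identity $\B = \tfrac12(\A + \U\A\U^\top)$ with $\U = \diag(\eye,-\eye)$ is exactly right, Ky Fan's principle applied to both terms gives $\sum_{i=1}^k \lambda_i(\B) \le \sum_{i=1}^k \lambda_i(\A)$ for every $k$ with equality of traces at $k=n$, and Karamata's majorization inequality for concave $f$ then gives $\trace{f(\A)} \le \trace{f(\B)} = \trace{f(\A_{11})} + \trace{f(\A_{22})}$. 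One small observation: along your route the nonnegativity of $f$ is not used anywhere --- concavity on $[0,\infty)$ alone suffices, since all eigenvalues of $\A$ and $\B$ lie in that interval and Karamata needs no sign condition. Nonnegativity is a genuine hypothesis in the cited Rotfel'd-type results because those are stated for general (non-Hermitian) matrices via $f(|A|)$ and singular values, where the trace equality underlying majorization can fail; your proof shows it is superfluous in the symmetric PSD setting actually needed by the paper. The trade-off is that the citation buys generality with no work, while your argument buys transparency and a slightly weaker hypothesis at the cost of invoking two standard but nontrivial facts (Ky Fan and Karamata).
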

\begin{proof} See Theorem 2.1 and Remark 2.4 in \cite{lee2011extension}.\end{proof}

We are ready to state and prove our main result of this section, 
which is the key in proving \cref{objerror} and \ref{modobjerror}.
 
\begin{thm}\label{thm:main}
Let $\A\in\reals^{n\times n}$ be a symmetric positive definite matrix with eigendecomposition
\[ \A = \U \Lambda \U^\top = \bmat{\U_1 & \U_2} \bmat{\eig_1 \\ & \eig_2} \bmat{\U_1^\top \\ \U_2^\top}, \]
where $\eig_1 = \diag(\lambda_1,\dots,\lambda_k)$ and $\eig_2 = \diag(\lambda_{k+1},\dots,\lambda_n)$ contain the eigenvalues arranged in descending order. Assume that the eigenvalue ratio $\gamma_k \equiv  \frac{\lambda_{k+1}}{\lambda_k} < 1$. 
Let $k$ be the target rank, $p \geq 2$ be the oversampling parameter such that $\ell \equiv k+p \leq n$, and let $q \geq 1$ be the number of subspace iterations.  Furthermore, assume that $\Q \in \mathbb{R}^{n\times \ell} $ and $\T \in  \mathbb{R}^{\ell\times \ell}$ are computed using 
\cref{alg:randsubspace} and define $\Ah \equiv \Q\T\Q^\top$.  Then 
\[ 
\begin{aligned}
0 \leq \expectation{\trace{(\eye+\Ah)^{-1}} - \trace{(\eye + \A)^{-1}}} \leq & \> \trace{f(\eig_2)}  + \trace{f(\gamma_k^{2q-1}C\eig_2)}, 
\end{aligned}\] 
where $f=x/(1+x)$, and the constant $C$ is defined in \cref{constant}.
\end{thm}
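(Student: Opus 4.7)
The plan is to first convert the statement into a form amenable to standard randomized-matrix inequalities. Since $f(x) = x/(1+x)$ satisfies $(\eye+X)^{-1} = \eye - f(X)$, the quantity in question rewrites as
\[ \trace{(\eye+\Ah)^{-1}} - \trace{(\eye+\A)^{-1}} = \trace{f(\A)} - \trace{f(\Ah)}, \]
and $f$ is nonnegative, monotonically increasing, operator concave on $[0,\infty)$, and satisfies $f(0) = 0$. For the lower bound, I would observe that $\Ah = \P\A\P$ with $\P = \Q\Q^\top$ an orthogonal projector; by Cauchy interlacing the eigenvalues of $\T = \Q^\top\A\Q$ interlace those of $\A$, and since the nonzero eigenvalues of $\Ah$ coincide with those of $\T$ while $f(0) = 0$, monotonicity of $f$ gives $\trace{f(\Ah)} \leq \trace{f(\A)}$.

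For the upper bound, I would complete $\Q$ to an orthogonal matrix $[\Q, \Q_\perp]$ and write $\A$ in this basis as a $2\times 2$ block matrix with diagonal blocks $\T$ and $\T_{22} \equiv \Q_\perp^\top\A\Q_\perp$. Applying \cref{lem:trace} to this partition, together with $\trace{f(\T)} = \trace{f(\Ah)}$ (using $f(0)=0$ to discard zero eigenvalues), yields
\[ \trace{f(\A)} - \trace{f(\Ah)} \;\leq\; \trace{f(\T_{22})}, \]
so the problem reduces to bounding $\expectation{\trace{f(\T_{22})}}$.

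Next, I would split $\T_{22}$ along the eigenspace decomposition of $\A$. Setting $\S_i \equiv \U_i^\top\Q_\perp$ for $i=1,2$ gives the PSD decomposition $\T_{22} = \S_1^\top\eig_1\S_1 + \S_2^\top\eig_2\S_2$. Applying Rotfel'd's trace inequality (valid for nonnegative concave $f$ on $[0,\infty)$ with $f(0) \geq 0$) produces
\[ \trace{f(\T_{22})} \;\leq\; \trace{f(\S_1^\top\eig_1\S_1)} + \trace{f(\S_2^\top\eig_2\S_2)}. \]
The ``tail'' term $\trace{f(\S_2^\top\eig_2\S_2)}$ is bounded by $\trace{f(\eig_2)}$: the identity $\S_2^\top\S_2 = \Q_\perp^\top\U_2\U_2^\top\Q_\perp \preceq \eye$ gives $\|\S_2\|_2 \leq 1$, hence $\lambda_i(\S_2^\top\eig_2\S_2) \leq \lambda_i(\eig_2)$ by Weyl-type inequalities for products of eigenvalues of PSD matrices, and monotonicity of $f$ closes the estimate (the extra slots beyond $n-\ell$ vanish).

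The main obstacle is controlling the ``subspace-iteration'' term $\expectation{\trace{f(\S_1^\top\eig_1\S_1)}}$ by $\trace{f(\gamma_k^{2q-1}C\eig_2)}$. The strategy is a deterministic structural analysis of the QR factorization of $\Y = \A^q\Omega$, expressed in the eigenbasis through $\Omega_i = \U_i^\top\Omega$: the range of $\Y$ captures the dominant eigenspace with an error controlled by $\eig_2^q \Omega_2 (\eig_1^q \Omega_1)^\dagger$, so that $\S_1$ (measuring the ``leakage'' of $\Q_\perp$ into the dominant subspace) inherits a $\gamma_k^{2q-1}$ decay factor. I would then pass to expectation using operator concavity of $f$ together with Jensen's inequality to move $\expectation{}$ inside $f$, and invoke the standard Gaussian moment bounds $\expectation{\|\Omega_1^\dagger\|_F^2} \leq (k+p)/(p-1)$ and $\expectation{\|\Omega_2\|_2^2} \leq (\sqrt{r-k}+\sqrt{k+p})^2 = \mu^2$ in order to produce the constant $C$ in \cref{constant}. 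The hardest part is arranging the deterministic inequality so that the resulting Loewner-order bound is of the form $\S_1^\top\eig_1\S_1 \preceq (\text{Gaussian factors})\cdot\gamma_k^{2q-1}\eig_2$ (modulo a unitary conjugation that is harmless under the trace), since this is precisely what is needed for monotonicity of $f$ to yield the clean bound $\trace{f(\gamma_k^{2q-1}C\eig_2)}$. Combining all pieces then gives the claimed two-term upper bound.
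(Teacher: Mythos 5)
Your overall architecture is sound and, for the reduction step, genuinely different from the paper's. The paper passes from $\trace{(\eye+\Ah)^{-1}}-\trace{(\eye+\A)^{-1}}$ to $\trace{f(\A-\A^{1/2}\Q\Q^\top\A^{1/2})}$ via a Loewner-order argument and a lemma of Bhatia, and only then applies the block trace inequality to a carefully constructed $2\times 2$ splitting of that difference in the eigenbasis of $\A$. You instead rewrite the difference as $\trace{f(\A)}-\trace{f(\Ah)}$, conjugate $\A$ into the $[\Q,\,\Q_\perp]$ basis, apply \cref{lem:trace} to peel off $\trace{f(\T)}=\trace{f(\Ah)}$, and are left with $\trace{f(\Q_\perp^\top\A\Q_\perp)}$, which you then split along the eigenspaces of $\A$ using the sum form of Rotfel'd's inequality, $\trace{f(\B_1+\B_2)}\le\trace{f(\B_1)}+\trace{f(\B_2)}$. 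This is a cleaner reduction (it avoids the $\A^{1/2}$ manipulations entirely), at the price of needing the sum version of Rotfel'd rather than the block version stated in \cref{lem:trace}; both are covered by the cited reference, so this is only a bookkeeping difference. Your lower bound and your treatment of the tail term $\trace{f(\S_2^\top\eig_2\S_2)}\le\trace{f(\eig_2)}$ are correct and match the paper's in substance.

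The gap is that the decisive quantitative step is only gestured at, and as phrased it is not quite in a provable form: a Loewner bound $\S_1^\top\eig_1\S_1\preceq(\cdots)\gamma_k^{2q-1}\eig_2$ cannot hold literally, since the left side is $(n-\ell)\times(n-\ell)$ and $\eig_2$ is $(n-k)\times(n-k)$. What is actually available, and what suffices, is a term-by-term eigenvalue bound. Concretely, one first needs the deterministic leakage estimate $\S_1\S_1^\top=\U_1^\top\Q_\perp\Q_\perp^\top\U_1\preceq\F_\S^\top\F_\S$ with $\F_\S=\eig_2^q\randmatrix_2\randmatrix_1^\dagger\eig_1^{-q}$; this is not automatic from the QR factorization of $\A^q\randmatrix$ but requires constructing an orthonormal $\W_\Q$ with $\Q\W_\Q=\U\bmat{\eye\\ \F_\S}(\eye+\F_\S^\top\F_\S)^{-1/2}$, as in the paper's Step 2 (borrowed from prior work). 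One then applies the multiplicative singular value inequalities to $\F_\S\eig_1^{1/2}$ to obtain $\lambda_j\!\left(\eig_1^{1/2}\F_\S^\top\F_\S\eig_1^{1/2}\right)\le\gamma_k^{2q-1}\|\widehat{\randmatrix}_2\randmatrix_1^\dagger\|_2^2\,\lambda_{k+j}$ for each $j$, handling separately the cases $k\le n-k$ and $k>n-k$ (using $f(0)=0$ to discard excess indices), and only then sums $f$ over eigenvalues. Finally, the expectation step needs only scalar concavity of $t\mapsto f(t\lambda)$ and the single moment bound $\expectation{\|\widehat{\randmatrix}_2\randmatrix_1^\dagger\|_2^2}\le C$ from the cited reference; splitting into separate moments of $\|\randmatrix_1^\dagger\|$ and $\|\randmatrix_2\|$ as you propose would yield a valid bound but not the specific constant $C$ of \cref{constant}. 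None of these repairs changes your outline, but they are where the real work of the theorem lives.
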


\begin{proof}
Suppose $\text{rank}(\A) = r$. Then, 
$\A$ has at most $r$ nonzero eigenvalues, and thus, we can define $\eig_{r-k} = \text{diag}(\lambda_{k+1},\dots,\lambda_r)$ so that 
\begin{equation}\label{eqn:lamrk}  \eig_2 = \bmat{\eig_{r-k} \\ & \mathbf{0}_{n-r-k}}.\end{equation}
 We split  this proof into several steps.
\paragraph{Step 0: Lower bound} 
Let $\tilde\lambda_1\geq \dots \geq \tilde\lambda_{\ell} \geq 0 $ be the
eigenvalues of $\T$ (and also $\Ah$). By the Cauchy interlacing theorem
(see \cite[Lemma 1]{saibaba2016randomized} for the specific version of the
argument), $\lambda_i \geq \tilde\lambda_i $ for $i=1,\dots,\ell$. Using
properties of the trace operator
\[ \begin{aligned}
\trace{(\eye+\Ah)^{-1}} - \trace{(\eye + \A)^{-1}} = & \>    \sum_{i=1}^\ell \frac{1}{1 +\tilde\lambda_i} + (n-\ell) -  \sum_{i=1}^n \frac{1}{1+\lambda_i}\\
= & \> \sum_{i=1}^\ell \frac{\lambda_i - \tilde\lambda_i}{(1+\lambda_i)(1+\tilde\lambda_i)}  + \sum_{i=\ell+1}^n \frac{\lambda_i}{1 +\lambda_i}.
\end{aligned}
\]
Since each term in the summation is nonnegative, the lower bound follows.

\paragraph{Step 1. Trace of matrix function} 
We first write $\Ah = \Q\T\Q^\top = \Q\Q^\top\A\Q\Q^\top = \P_\Q\A\P_\Q$, where $\P_\Q = \Q\Q^\top$ is an orthogonal projection matrix onto the range of $\Q$. Since 
$\Ah$ has the same eigenvalues as $\R \equiv \A^{1/2}\P_\Q\A^{1/2}$~\cite[Theorem 1.3.22]{HoJ13},
\begin{equation}\label{eqn:interchange} \trace{(\eye + \Ah)^{-1}}  = \trace{(\eye + \A^{1/2}\P_\Q\A^{1/2})^{-1}}. 
\end{equation}
Also, since $\P_\Q \preceq \eye$, it follows that $\R =\A^{1/2}\P_\Q\A^{1/2}\preceq \A$ and $\mathbf{0} \preceq \A - \R $. Therefore, from the proof of~\cite[Lemma X.1.4]{bhatia1997matrix} and \cref{eqn:interchange},
\[ \trace{(\eye + \Ah)^{-1}} - \trace{(\eye+\A)^{-1}} \leq \trace{\left( \eye - (\eye+ \A- \R)^{-1} \right)} = \trace{ f(\A - \R)}, \]
where $f(x)$ was defined in the statement of the theorem. 
\paragraph{Step 2. Reducing the dimensionality} Let $\F_\S \equiv \eig_2^q \randmatrix_2\randmatrix_1^\dagger \eig^{-q}$. In \cref{alg:randsubspace}, we compute $\Y = \A^q\randmatrix $ and let $\Y = \Q \R_{\Y}$ be the thin QR factorization of $\Y$. Let $\W_\Q = \R_{\Y}\randmatrix_1^\dagger \eig_1^{-q}(\eye + \F_{\S}^\top\F_{\S})^{-1/2} \in \reals^{\ell \times k}$ be defined as in the  proof of \cite[Theorem 6]{saibaba2016randomized}. It was also shown that $\W_\Q$ has orthonormal columns, so that 
\[ \Q\W_\Q = \U \bmat{\eye \\ \F_\S} (\eye+\F_\S^\top\F_\S)^{-1/2} \in \reals^{n\times k},\]
has orthonormal columns. The following sequence of identities also hold:  $\W_\Q\W_\Q^\top \preceq \eye$, $\Q\W_\Q\W_\Q^\top\Q^\top \preceq \Q\Q^\top$, and
\[ \A - \R \preceq \A - \A^{1/2}\Q\W_\Q\W_\Q^\top\Q^\top \A^{1/2} \equiv \S .\]
Since $f(x)$ is a monotonic increasing function, $\trace{f(\A-\R)}\leq \trace{f(\S)}$. 
\paragraph{Step 3. Split into the diagonal blocks} We can rewrite $\S$ as 
\[ \S = \U \bmat{\S_1 & *\\ * &\S_2} \U^\top ,\]
where $*$ represents blocks that can be ignored and  
\[ 
\S_1 \equiv \eig_1^{1/2}(\eye-(\eye+\F_\S^\top\F_\S)^{-1})\eig_1^{1/2}, 
\qquad 
\S_2 = \eig_2^{1/2}(\eye-\F_\S(\eye+\F^\top_\S\F_\S)^{-1}\F_\S^\top)\eig_2^{1/2}.
\]
We can invoke \cref{lem:trace}, since $f(x) = x/(1+x)$ is concave and nonnegative on $[0,\infty)$. Therefore, we have 
\[\trace{f(\S)} \leq \trace{f(\S_1)} + \trace{f(\S_2)}.\]
Note that the matrix $\U$ disappears, because the trace is 
unitarily invariant.
\paragraph{Step 4. Completing the structural bound} Using an SVD based argument it can be shown that $\eye - (\eye+\F_\S^\top\F_\S )^{-1} \preceq \F_\S^\top\F_\S$, so that 
\[\S_1 = \eig_1^{1/2}(\eye-(\eye+\F_\S^\top\F_\S)^{-1})\eig_1^{1/2} \preceq \eig_1^{1/2}\F_\S^\top\F_\S\eig_1^{1/2}. \]
Therefore, since $f$ is monotonically increasing 
\begin{equation}\label{eqn:s1} 
\trace{f(\S_1)} \leq  \> \trace{ f( \eig_1^{1/2}\F_\S^\top\F_\S\eig_1^{1/2}) } =   \> \sum_{j=1}^k f\left( \lambda_j\left[\eig_1^{1/2}\F_\S^\top\F_\S\eig_1^{1/2}\right] \right) .
\end{equation}
Note that $\F_\S\eig_1^{1/2}$ is $(n-k)\times k$ has at most $\min\{n-k,k\}$ nonzero singular values. Looking more into the structure of $\F_\S\eig_1^{1/2}$, and using \cref{eqn:lamrk}, we can write 
\[ \begin{aligned}
 \F_\S\eig_1^{1/2} = & \> \eig_2^{1/2} \bmat{ \eig_{r-k}^{q-1/2} \\ & \mathbf{0}}  \randmatrix_2\randmatrix_1^\dagger \eig_1^{-q + 1/2} \\
= & \> \eig_2^{1/2} \bmat{ \eig_{r-k}^{q-1/2} \widehat\randmatrix_2\randmatrix_1^\dagger \eig_1^{-q + 1/2} \\ \mathbf{0}},
\end{aligned}\]
where $\widehat\randmatrix_2 \in \mathbb{R}^{(r-k)\times (k+p)}$ such that $ \randmatrix_2 = \bmat{\widehat\randmatrix_2 \\ \mathbf{*}}$. Using the  multiplicative singular value inequalities~\cite[Equation (7.3.14)]{HoJ13} and repeated use of the submultiplicative inequality gives
\begin{equation}\label{eqn:mult}\sigma_j(\F_\S\eig_1^{1/2}) 
\leq \gamma_k^{q-1/2} 
\|\widehat{\randmatrix}_2\randmatrix_1^\dagger\|_2 \sigma_j(\eig_2^{1/2}),
\qquad j=1,\dots,\min\{k,n-k\}.
\end{equation}
The analysis splits into two cases:
\begin{description}
\item [Case 1: $k \leq n-k$.] Since $f$ is monotonically increasing, using \cref{eqn:s1,eqn:mult}
\[ \begin{aligned} \trace{f(\S_1)} \leq & \> \sum_{j=1}^k f\left( \lambda_j\left[\eig_1^{1/2}\F_\S^\top\F_\S\eig_1^{1/2}\right] \right) \leq \sum_{j=1}^k f\left(\gamma_k^{2q-1} \|\widehat\randmatrix_2\randmatrix_1^\dagger\|_2^2 \sigma_j^2(\eig_2^{1/2})\right) \\ 
\leq &\> \sum_{j=1}^{n-k} f\left(\gamma_k^{2q-1} \|\widehat\randmatrix_2\randmatrix_1^\dagger\|_2^2 \sigma_j^2(\eig_2^{1/2})\right) = \trace{f (\gamma_k^{2q-1} \|\widehat\randmatrix_2\randmatrix_1^\dagger\|_2^2 \eig_2)}. 
\end{aligned}
   \]  
\item [Case 2: $k > n-k$.] 
Since $\eig_1^{1/2}\F_\S^\top\F_\S\eig_1^{1/2}$ has at most $n-k$ nonzero eigenvalues, use the fact that $f(0)=0$, along with \cref{eqn:s1,eqn:mult} to obtain
\[ \begin{aligned}\trace{f(\S_1)} \leq & \>  \sum_{j=1}^k f\left( \lambda_j\left[\eig_1^{1/2}\F_\S^\top\F_\S\eig_1^{1/2}\right] \right) = \sum_{j=1}^{n-k} f\left( \lambda_j\left[\eig_1^{1/2}\F_\S^\top\F_\S\eig_1^{1/2}\right] \right)\\
\leq &\> \sum_{j=1}^{n-k} f\left(\gamma_k^{2q-1} \|\widehat\randmatrix_2\randmatrix_1^\dagger\|_2^2 \sigma_j^2(\eig_2^{1/2})\right) = \trace{f (\gamma_k^{2q-1} \|\widehat\randmatrix_2\randmatrix_1^\dagger\|_2^2 \eig_2)}. 
\end{aligned}
\] 
\end{description}
To summarize, in both cases $\trace{f(\S_1)}  \leq \trace{f (\gamma_k^{2q-1} \|\widehat\randmatrix_2\randmatrix_1^\dagger\|_2^2 \eig_2)}.$ Similarly, since $\mathbf{0}\preceq \F_\S(\eye+\F_\S^\top\F_\S)^{-1}\F_\S^\top $, we can show 
\[ \S_2 =  \> \eig_2^{1/2}(\eye-\F_\S(\eye+\F_\S^\top\F_\S)^{-1}\F_\S^\top)\eig_2^{1/2} \preceq \eig_2, \]
so that $\trace{f(\S_2)} \leq \trace{f(\eig_2)}$. 
Combine with step 3 to obtain
\[ \trace{f(\S)} \leq \trace{f(\eig_2) } +\trace{f(\gamma_k^{2q-1} \|\widehat\randmatrix_2\randmatrix_1^\dagger\|_2^2 \eig_2)}.\]
Combine this with the results of steps 1 and 2, to obtain the structural bound 
\[ \trace{(\eye + \Ah)^{-1}} - \trace{(\eye+\A)^{-1}} \leq \trace{f(\gamma_k^{2q-1} \|\widehat\randmatrix_2\randmatrix_1^\dagger\|_2^2 \eig_2)} + \trace{f(\eig_2)} .\]
\paragraph{Step 5. The expectation bound} Note that $\widehat\randmatrix_2 \in\mathbb{R}^{(r-k)\times (k+p)}$ and $\randmatrix_1 \in \mathbb{R}^{k\times (k+p)}$. From the proof of \cite[Theorem 1]{saibaba2016randomized}, we have $\mathbb{E}\,[\|\widehat\randmatrix_2\randmatrix_1^\dagger\|_2^2] \leq C$, where $C$ was defined in~\cref{constant}. By Jensen's inequality, using the fact that $f(x) = x/(1+x)$ is concave on $[0,\infty)$ we have 
\[ \begin{aligned}
\expectation{\trace{(\eye+\Ah)^{-1}} - \trace{(\eye + \A)^{-1}}} \leq & \> \trace{f(\eig_2)} + \expectation{\trace{f(\gamma_k^{2q-1} \|\widehat\randmatrix_2\randmatrix_1^\dagger\|_2^2 \eig_2)}}\\
\leq & \> \trace{f(\eig_2)} + \trace{f(\gamma_k^{2q-1}C\eig_2)}.
\end{aligned}
\]
Combining this with the lower bound (step 0) completes the proof. 
\end{proof}

\subsection{Proof of \cref{objerror}}
\label{proofobjerror}

For the remaining discussion, recall the notation from \cref{objerror} 
 \begin{equation}
\P_j=\FF^\top\E_j^\text{noise} \FF,
\label{notation}
\end{equation} 
where $\FF$ and $\E_j^\text{noise}$ are defined in \cref{eqn:ff} and \cref{Wnoise}, respectively. We will also need 
\begin{lem}[See \cite{alexanderian2018dopt}]
Let $\A,\B\in\reals^{n\times n}$ and let $\B$ be a symmetric positive semidefinite matrix.  Then, we have $|\trace{\A\B}|\leq\|\A\|_2\trace{\B}$.
\label{lemnormtr}
\end{lem}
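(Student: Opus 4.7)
The plan is to prove this classical trace inequality by diagonalizing the symmetric positive semidefinite matrix $\B$ and then estimating each diagonal contribution using the operator norm of $\A$. First I would write the spectral decomposition $\B = \U \Lambda \U^\top$, where $\U$ is orthogonal and $\Lambda = \diag(\lambda_1, \ldots, \lambda_n)$ with $\lambda_i \geq 0$ because $\B$ is symmetric positive semidefinite. Using the cyclic property of the trace, I would then write
\[
\trace{\A\B} = \trace{\A\U\Lambda\U^\top} = \trace{\widetilde{\A}\Lambda}, \qquad \widetilde{\A} \equiv \U^\top \A \U.
\]

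Since $\Lambda$ is diagonal, the right-hand side collapses to $\sum_{i=1}^n \widetilde{\A}_{ii} \lambda_i$. Applying the triangle inequality and the nonnegativity of the $\lambda_i$,
\[
|\trace{\A\B}| \leq \sum_{i=1}^n |\widetilde{\A}_{ii}| \lambda_i \leq \Big(\max_{i} |\widetilde{\A}_{ii}|\Big) \sum_{i=1}^n \lambda_i.
\]
The sum on the right is exactly $\trace{\B}$. The remaining step is to bound the diagonal entries $\widetilde{\A}_{ii} = \vec{u}_i^\top \A \vec{u}_i$, where $\vec{u}_i$ denotes the $i$th column of $\U$. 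By Cauchy--Schwarz and the definition of the spectral norm,
\[
|\vec{u}_i^\top \A \vec{u}_i| \leq \|\vec{u}_i\|_2 \|\A \vec{u}_i\|_2 \leq \|\A\|_2 \|\vec{u}_i\|_2^2 = \|\A\|_2,
\]
since the columns of $\U$ are unit vectors. Combining the estimates gives the claimed bound $|\trace{\A\B}| \leq \|\A\|_2 \trace{\B}$.

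There is no real obstacle here; this is a well-known inequality (a special case of the $(\infty,1)$ Schatten-norm Hölder inequality, since $\trace{\B} = \|\B\|_*$ when $\B \succeq \mathbf{0}$), and the orthogonal diagonalization plus Cauchy--Schwarz gives the most elementary route. The one detail to get right is that $\A$ is not assumed symmetric, so one cannot diagonalize $\A$; however, because $\B$ is the one assumed symmetric positive semidefinite, diagonalizing $\B$ and bounding the diagonal Rayleigh-type quantities $\vec{u}_i^\top \A \vec{u}_i$ is the natural route and avoids any symmetry assumption on $\A$.
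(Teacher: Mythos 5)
Your proof is correct and complete: diagonalizing $\B$, using cyclicity of the trace, and bounding the diagonal entries $\vec{u}_i^\top\A\vec{u}_i$ by $\|\A\|_2$ via Cauchy--Schwarz is the standard argument, and you correctly handle the fact that $\A$ need not be symmetric. The paper itself gives no proof of this lemma---it only cites an external reference---so there is nothing to compare against; your self-contained argument would serve as a valid replacement for that citation.
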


\begin{proof}[ \cref{objerror}]
Recall our estimator $\aoptrand{(\vec{w};\ell)}$ from \cref{aoptest}.  For fixed $\ell$, using \cref{lemnormtr} we have
$$
\begin{aligned}
\mathbb{E}|\aopt{(\vec{w})}-\aoptrand{(\vec{w};\ell)}|=& \> \mathbb{E}\left|\trace{(\eye+\HH(\vec{w}))^{-1}\MMprior-(\eye+\hatHH(\vec{w}))^{-1}\MMprior} \right|\\
\leq & \> \|\MMprior\|_2 \mathbb{E} |\trace{(\eye+\HH(\vec{w}))^{-1}-(\eye+\hatHH(\vec{w}))^{-1}}|.
\end{aligned}
.$$
Applying \cref{thm:main} establishes \cref{equ:aopt_obj_bound}.

Next, we consider \cref{equ:aopt_grad_bound}. 
Recall the estimator $\gradaoptrand{(\vec{w};\ell)}$ from 
\cref{estgrad}.  We can write the absolute error as
\begin{multline*}
|\partial_j \aopt{(\vec{w})}-\gradaoptrand{(\vec{w};\ell)}|\\
=
|\trace{\big((\eye+\HH(\vec{w}))^{-1}\P_j(\eye +\HH(\vec{w}))^{-1}
 - (\eye+\hatHH(\vec{w}))^{-1}{\P}_j(\eye+\hatHH(\vec{w}))^{-1}\big)\MMprior}|,
\end{multline*} 
where $\hatHH(\vec{w})=\Q\T\Q^\top.$ We use the decomposition
\begin{multline*}
\big((\eye+\HH(\vec{w}))^{-1}\P_j(\eye +\HH(\vec{w}))^{-1} - (\eye+\hatHH(\vec{w}))^{-1}{\P}_j(\eye+\hatHH(\vec{w}))^{-1}\big)\MMprior \\
= -\left(\D \P_j (\eye + \HH(\vec{w}))^{-1}  +  (\eye+\hatHH(\vec{w}))^{-1}\P_j\D\right)\Z, 
\end{multline*}
where $\D \equiv (\eye +\hatHH(\vec{w}))^{-1}  - (\eye+\HH(\vec{w}))^{-1}$. Repeated application of \cref{lemnormtr} gives 
\[\begin{aligned} |\partial_j \aopt{(\vec{w})}-\partial_j \aoptrand{(\vec{w};\ell)}| \leq  & \> \|\P_j\|_2\|\MMprior\|_2 \left(\|(\eye + \HH(\vec{w}))^{-1}\|_2 + \right. \\
& \> \qquad ~~~~~~~~~~~~~~~~~~~~~ \left. \|(\eye + \hatHH(\vec{w}))^{-1}\|_2 \right) \trace{\D}. 
\end{aligned}
\] 

Since $\eye + \HH(\vec{w})$ and $\eye+\hatHH(\vec{w})$ have eigenvalues greater than or 
equal to one, $\|(\eye + \HH(\vec{w}))^{-1}\|_2 + 
\|(\eye+\hatHH(\vec{w}))^{-1}\|_2\leq 2$. Finally, taking the expectation and applying \cref{thm:main}, we have the desired result.
\end{proof}

\subsection{Proof of \cref{modobjerror}}
\label{proofmodobjerror}
\begin{proof}
The proof follows in similar lines as the proof of \cref{objerror}
except the fact that in \cref{modobjerror} we do not 
have $\MMprior$ in the expressions.
\end{proof}

\end{appendix}
\bibliography{references}
\bibliographystyle{siam}

\end{document}